\newtheorem*{rep@theorem}{\rep@title}
\newcommand{\newreptheorem}[2]{%
\newenvironment{rep#1}[1]{%
 \def\rep@title{#2 \ref{##1}}%
 \begin{rep@theorem}}%
 {\end{rep@theorem}}}
\newtheorem{theorem}{Theorem}[section]
\newtheorem{lemma}[theorem]{Lemma}
\newtheorem{conjecture}[theorem]{Conjecture}
\newtheorem{corollary}[theorem]{Corollary}
\newcommand{\N}{\mathbb{N}}
\newcommand{\Z}{\mathbb{Z}}
\newcommand{\C}{\mathbb{C}}
\title{Rankin-Selberg gamma factors of level zero representations of $GL_n$}
\author{Rongqing Ye}
\address{Department of Mathematics\\
The Ohio State University}
\email{ye.352@osu.edu}
\date{April 5, 2018}
\begin{document}
\begin{abstract}
For a $p$-adic local field $F$ of characteristic 0, with residue field $\mathfrak{f}$, we prove that the Rankin-Selberg gamma factor of a pair of level zero representations of linear general groups over $F$ is equal to a gamma factor of a pair of corresponding cuspidal representations of linear general groups over $\mathfrak{f}$. Our results can be used to prove a variant of Jacquet's conjecture.
\end{abstract}
\maketitle

\section{Introduction}\label{sec:introduction}

Let $F$ be a $p$-adic local field of characteristic 0, with ring of integers $\mathfrak{o}$, maximal ideal $\mathfrak{p}$ and residue field $\mathfrak{f} = \mathfrak{o}/\mathfrak{p}$ of $q$ elements. Throughout the whole paper, we fix an additive character $\psi$ on $F$ of conductor $\mathfrak{p}$. That is to say, $\psi$ is trivial on $\mathfrak{p}$ but not trivial on $\mathfrak{o}$. Thus, $\psi$ can be descended to a nontrivial character of $\mathfrak{f}$, which will be denoted by $\psi$ as well.

For a ring $R$, let $G_n(R) = GL_n(R)$ be the general linear group with coefficients in $R$. From an irreducible cuspidal representation $\sigma$ of $G_n(\mathfrak{f})$ which can be viewed as a representation of $G_n(\mathfrak{o})$, Bushnell and Kutzko constructed, via their type theory \cite{BushnellKutzko93}, irreducible supercuspidal representations of $G_n(F)$. Among these supercuspidal representations, there is a class of \emph{level zero} representations in the form
$$\pi = \mathrm{ind}_{F^\times G_n(\mathfrak{o})}^{G_n(F)} \Lambda,$$
where $\Lambda$ is a representation of $F^\times G_n(\mathfrak{o})$ restricting to $\sigma$ on $G_n(\mathfrak{o})$. Let $\pi^\prime$ be a level zero representation of $G_m(F)$ coming from an irreducible cuspidal representation $\sigma^\prime$ of $G_m(\mathfrak{f})$. For the pair $(\pi, \pi^\prime)$, we can follow Jacquet, Piatetski-Shapiro and Shalika \cite{JacquetPiShShalika83} to define an important invariant $\gamma(s, \pi \times \pi^\prime, \psi)$. Correspondingly, we consider $\gamma(\sigma \times \sigma^\prime, \psi)$ as in Roditty's master thesis \cite{Roditty10}, Nien \cite{Nien14}, or in Piatetski-Shapiro's unpublished lecture notes \cite{PiatetskiShapiro1976}. In this paper, we are going to express these two gamma factors in terms of Bessel functions to show that these two gamma factors are equal.

\begin{theorem}\label{thm:main}
Let $\pi$ and $\pi^\prime$ be level zero representations of $G_n(F)$ and $G_m(F)$, coming from $\sigma$ and $\sigma^\prime$ respectively. If $n = m$, we assume that $\sigma^\prime$ is not isomorphic to $\tilde{\sigma}$, the contragradient of $\sigma$. Then
\begin{equation*}
\gamma(s, \pi \times \pi^\prime, \psi) = \gamma(\sigma \times \sigma^\prime, \psi).
\end{equation*}
\end{theorem}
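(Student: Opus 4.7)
The plan is to execute the strategy announced in the introduction: express both $\gamma(s, \pi \times \pi', \psi)$ and $\gamma(\sigma \times \sigma', \psi)$ in terms of (partial) Bessel functions, and then use the level zero structure to match them.

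First, I would introduce the normalized partial Bessel function $B_\pi \in \mathcal{W}(\pi, \psi)$ of the supercuspidal $\pi$, characterized by $B_\pi(u_1 g u_2) = \psi(u_1)\psi(u_2) B_\pi(g)$ for $u_1, u_2 \in N(F)$ and $B_\pi(I_n) = 1$. Using the compact induction model $\pi = \mathrm{ind}_{F^\times G_n(\mathfrak{o})}^{G_n(F)}\Lambda$, Frobenius reciprocity, and the uniqueness of the Whittaker model, I would establish two properties: (a) the support of $B_\pi$ is controlled, contained in the product of the center, $N(F)$, $G_n(\mathfrak{o})$ and $N(F)$ (up to the Weyl element needed for the Bessel functional); and (b) for $k \in G_n(\mathfrak{o})$ one has $B_\pi(k) = B_\sigma(\bar k)$, where $B_\sigma$ is the finite field Bessel function of $\sigma$ and $\bar k \in G_n(\mathfrak{f})$ is the reduction of $k$ modulo $\mathfrak{p}$. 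The analogous statements hold for $(\pi', \sigma')$.

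Next, I would substitute $W = B_\pi$ and $W' = B_{\pi'}$ into the Jacquet--Piatetski-Shapiro--Shalika zeta integrals $\Psi(s, W, W')$ and their duals $\Psi(1-s, \tilde W, \tilde W')$. Thanks to property (a) and the fact that $|\det(\cdot)| \equiv 1$ on $G_n(\mathfrak{o})$, after factoring out the central character the $p$-adic integration collapses to a finite sum over $G_n(\mathfrak{f})$. By (b) this finite sum is precisely the finite field zeta integral at $(B_\sigma, B_{\sigma'})$ appearing in \cite{Roditty10, Nien14, PiatetskiShapiro1976}. Applying the Rankin--Selberg functional equation on the $p$-adic side isolates $\gamma(s, \pi \times \pi', \psi)$, while the analogous functional equation on the finite field side isolates $\gamma(\sigma \times \sigma', \psi)$, forcing the two to agree. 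The hypothesis $\sigma' \not\cong \tilde\sigma$ when $n = m$ ensures $\pi' \not\cong \tilde\pi$, so the relevant $L$-factors are trivial and both gamma factors are genuine $\varepsilon$-factors, avoiding any pole-related complications.

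The main obstacle I anticipate is the support and restriction analysis of $B_\pi$ in the first step. Extracting the precise support of the partial Bessel function from the compact-induction model, and identifying its restriction to $G_n(\mathfrak{o})$ with $B_\sigma \circ \mathrm{red}$, requires a careful Bruhat-type decomposition of $G_n(F)$ adapted to the $F^\times G_n(\mathfrak{o})$-biinvariance inherited from $\Lambda$, together with the interaction between the Whittaker functional and $\Lambda$ on intertwining double cosets. Once this comparison of Bessel functions is in place, the remainder of the argument reduces to a term-by-term matching of two explicit formulas.
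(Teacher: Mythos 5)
The overall strategy you outline matches the paper's: express both gamma factors through explicit Whittaker/Bessel data and compare. But there are two genuine gaps that would derail the argument as written.

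First, the object you posit as the starting point does not exist. You ask for $B_\pi \in \mathcal{W}(\pi, \psi)$ with $B_\pi(u_1 g u_2) = \psi(u_1)\psi(u_2)B_\pi(g)$ for all $u_1, u_2 \in N(F)$. No such nonzero Whittaker function exists for a supercuspidal $\pi$ of a $p$-adic $G_n$: in the Kirillov model $\mathcal{K}(\pi,\psi) \cong C_c^\infty(U_n(F)\backslash P_n(F), \psi)$, a right $\psi$-eigenvector of $N(F)$ would be forced to be supported on a set with empty interior, and by smoothness it vanishes (already visible for $GL_2$, where $\xi \in C_c^\infty(F^\times)$ with $\psi(ax)\xi(a) = \psi(x)\xi(a)$ for all $x$ forces $\xi$ supported at $a=1$, hence $\xi = 0$). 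This is precisely why Bessel ``functions'' of $p$-adic groups are distributions, not elements of the Whittaker model. The paper instead uses the Paskunas--Stevens Whittaker function $W_\pi$, which is left $(U_n(F),\psi)$-equivariant only, has support $U_n(F)F^\times K_n$, restricts on $F^\times K_n$ to the inflation of the finite Bessel function $J_{\sigma,\psi}$, and --- crucially, and stronger than anything Bessel-like --- satisfies $\mathrm{Supp}\, W_\pi \cap P_n = U_n(F)(H_n^1 \cap P_n)$ with $W_\pi(uh) = \psi(u)$ there. This last property is what makes the undualized integral $\Psi$ equal to $1$; you cannot get it from two-sided equivariance.

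Second, the reduction is not a clean ``collapse to a finite sum'' in the $n=m$ case, and the hypothesis $\sigma' \not\cong \tilde\sigma$ is not merely bookkeeping about $L$-factors. Writing $U_n(F)F^\times K_n = \coprod_{l\in\Z} \varpi^l U_n(F) K_n$, the dual integral produces an $l=0$ term, which is exactly the finite-field gamma factor, plus a geometric series $\sum_{l<0} q^{-nl(s-1)}\omega(\varpi)^l$ multiplying $\sum_{g\in U_n(\mathfrak{f})\backslash G_n(\mathfrak{f})} J_{\sigma,\psi}(g)J_{\sigma',\psi^{-1}}(g)$. To kill the $s$-dependent series you need the orthogonality statement of Lemma~\ref{lem:representations-and-bessels}: that sum vanishes if and only if $\sigma' \not\cong \tilde\sigma$. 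This is where the hypothesis enters and you have not anticipated it; without it the computation genuinely yields an $s$-dependent answer (the case $\sigma' \cong \tilde\sigma$, treated separately in the paper). Finally, even in the $n > m$ case the ``support controls everything'' step hides work: one must analyze when $\begin{pmatrix} & 1 & \\ & & I \\ h & & x\end{pmatrix}$ lies in $U_n(F)F^\times K_n$ via a block-matrix/Bruhat argument to force $h \in U_m(F)K_m$ and $x \in M_{m,n-m-1}(\mathfrak{o})$; this is the content of the reduction, not a formal consequence of the support statement.
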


Since definitions of gamma factors for $G_n(F) \times G_m(F)$ with $n < m$ and those for $G_n(F) \times G_n(F)$ are different, we split \Cref{thm:main} into two cases, \Cref{thm:GnXGm} and \Cref{thm:GnXGn}, and prove them separately. An implicit consequence of the theorem is that gamma factors for pairs of level zero representations under the assumption in \Cref{thm:main} are constants. This consequence appears also in the work of Bushnell, Henniart and Kutzko \cite{BushnellHenniartKutzko98}, where they give an explicit formula for conductors in terms of quantities from simple strata \cite{BushnellKutzko93}.

Paskunas and Stevens in their work \cite{PaskunasStevens08} constructed explicit Whittaker functions and used them to derive inductive formulas for epsilon factors of pairs of supercuspidal representations of $G_n(F)$. In their formula, epsilon factor of a pair of supercuspidal representations reduces to an epsilon factor of a pair of level zero representations. In a similar fashion, Kim \cite{Kim14} proved inductive formulas for $G_n(F) \times G_m(F)$ where $n > m$. To some extent, our results are some efforts to make these inductive formulas more explicit. 

Moreover, our results imply a variant of Jacquet's conjecture on the local converse theorem. Jacquet's conjecture basically says that an irreducible generic representation $\pi$ of $G_n(F)$ is uniquely determined up to isomorphisms by the family of $\gamma(s, \pi \times \tau, \psi)$, where $\tau$ runs over irreducible generic representations of $G_r(F)$ with $1 \le r \le \left[\frac{n}{2}\right]$. To be more precise, we have

\begin{conjecture} Let $\pi_1$ and $\pi_2$ be irreducible generic representations of $G_n(F)$ sharing the same central character. If
$$\gamma(s, \pi_1 \times \tau, \psi) = \gamma(s, \pi_2 \times \tau, \psi)$$
for all irreducible generic representations of $G_r(F)$ with $1 \le r \le \left[\frac{n}{2}\right]$, then $\pi_1 \cong \pi_2$.
\end{conjecture}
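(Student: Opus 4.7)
The strategy is to show that the two Whittaker models $\mathcal{W}(\pi_1,\psi)$ and $\mathcal{W}(\pi_2,\psi)$ coincide as subspaces of $\mathrm{Ind}_{N_n(F)}^{G_n(F)} \psi$; combined with uniqueness of Whittaker functionals and the central-character hypothesis, this yields $\pi_1 \cong \pi_2$. The bridge from the gamma-factor hypothesis to Whittaker functions is the Rankin--Selberg local functional equation: for each irreducible generic $\tau$ of $G_r(F)$ with $r \le [n/2]$ and each pair $(W,W')$ of Whittaker functions,
$$\tilde{\Psi}(1-s, \rho(w_{n,r})\tilde{W}, \tilde{W}') = \gamma(s, \pi_i \times \tau, \psi)\, \Psi(s, W, W'),$$
so the hypothesis translates directly into equality of ratios of these bilinear integrals for all such $\tau$ and all test data.

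I would then reduce to the case where $\pi_1$ and $\pi_2$ are supercuspidal. By Bernstein--Zelevinsky theory every irreducible generic representation is a full parabolic induction of essentially square-integrable pieces, the Rankin--Selberg gamma factor is multiplicative under such induction, and an induction on $n$ recovers the general generic case from the supercuspidal one, provided that twists on $G_r(F)$ with $r \le [n/2]$ suffice to detect each Zelevinsky segment of the constituents --- a combinatorial check on segments whose lengths are bounded by $n$. For the supercuspidal case the idea is to construct, from the available twists, a family of test vectors that localize $\Psi(s,W,W')$ onto a single Bruhat cell on the mirabolic subgroup $P_n(F)$. This is the role of Howe vectors and partial Bessel functions: by letting $W'$ range over Whittaker functions of generic $\tau$ with $r \le [n/2]$ and inverting the resulting Mellin-type transform, one matches the restrictions of $W_1 \in \mathcal{W}(\pi_1,\psi)$ and $W_2 \in \mathcal{W}(\pi_2,\psi)$ to each cell, working down the Bruhat order from the open cell.

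The main obstacle, and what distinguishes this conjecture from its easier counterpart with $r \le n-1$, is the \emph{exchange step}: Bruhat cells indexed by Weyl elements of length greater than $[n/2]$ are not reachable by twists of dimension $r \le [n/2]$. One must instead exploit the contragredient symmetry together with the identity
$$\gamma(s, \pi_i \times \tau, \psi) = \omega_{\pi_i}(-1)^{r-1}\omega_\tau(-1)^{n-1}\gamma(s, \tilde{\pi}_i \times \tilde{\tau}, \psi^{-1})$$
to trade information about long cells of $\pi_i$ for information about short cells of $\tilde{\pi}_i$. Executing this exchange rigorously requires an intertwined double induction, one on Bruhat cells and one on $n$, and ultimately leans on the stability of $\gamma(s, \pi \times \chi, \psi)$ under highly ramified character twists to control the asymptotic behavior of $W_i$ near the identity on the diagonal torus. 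The level-zero analogue established in \Cref{thm:main} provides a model for the cell-by-cell matching via Bessel functions, but the general supercuspidal case demands a genuine substitute for the explicit formula, which is precisely where the partial-Bessel-function theory enters.
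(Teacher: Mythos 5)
First, note that the paper does not prove this statement: it is stated as a \emph{conjecture} (Jacquet's conjecture on the local converse theorem) and the author explicitly defers its resolution to Chai and to Jacquet--Liu, proving in the paper only the level zero variant (\Cref{thm:local-converse-level-zero}) and the finite-field analogue (\Cref{thm:local-converse-finite}). So there is no internal proof to compare against; what you have written is an outline of the strategy of those external proofs rather than a proof.

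As a proof, your proposal has genuine gaps at every load-bearing point. The reduction from generic to supercuspidal via multiplicativity of gamma factors is itself a nontrivial theorem (it requires showing that twists by $G_r$ with $r \le [n/2]$ already separate the inducing segment data, and that the local converse property is preserved under the Bernstein--Zelevinsky classification); you flag it as ``a combinatorial check'' but do not perform it. The cell-by-cell matching of Whittaker functions via Howe vectors and partial Bessel functions is asserted, not executed: ``inverting the resulting Mellin-type transform'' is precisely where all the analytic work lives, and the relevant partial Bessel functions are only determined on cells reachable by the available twists. Most importantly, the ``exchange step'' --- which you correctly identify as the essential difficulty distinguishing the $[n/2]$ bound from the easy $n-1$ bound --- is left entirely unexecuted. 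The contragredient symmetry of the gamma factor that you invoke does not by itself transfer information about Bruhat cells of length greater than $[n/2]$; Chai's argument requires a delicate analysis of the vanishing and transformation properties of partial Bessel functions under the outer automorphism $g \mapsto {}^t g^{-1}$, together with a carefully structured induction, and Jacquet--Liu's argument is different again. Without supplying that mechanism, the proposal is a roadmap of the known proofs, not a proof. If your goal is the result actually established in this paper, the honest route is the one taken here: restrict to level zero representations, use \Cref{thm:main} to descend the gamma factors to the residue field, and invoke the finite-field converse theorem.
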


This conjecture has been recently confirmed positively independently by Chai \cite{Chai16} and Jacquet-Liu \cite{JacquetLiu16}. The finite-field analogue of this conjecture was proved by Nien \cite{Nien14}. In this paper, we will make use of \Cref{thm:main} and a slightly improved \Cref{thm:local-converse-finite} on converse problems over a finite field (cf. \cite[Theorem 3.9]{Nien14}), to prove

\begin{reptheorem}{thm:local-converse-level-zero}
Let $\pi_1$ and $\pi_2$ be level zero representations of $G_n(F)$ with the same central character. If
$$\gamma(s, \pi_1 \times \tau, \psi) = \gamma(s, \pi_2 \times \tau, \psi)$$
for all level zero representations of $G_r(F)$ with $1 \le r \le \left[\frac{n}{2}\right]$, then $\pi_1 \cong \pi_2$.
\end{reptheorem}

\section{Notations and Preliminaries}\label{sec:notations}

In this section, we are going to introduce the essential notations and preliminaries for our results. If $R$ is a ring, then we use $M_{r,c}(R)$ to denote the set of $r \times c$ matrices with coefficients in $R$. $U_n(R)$ is the set of standard unipotent matrices with coefficients in $R$:
\begin{equation*}
U_n(R) = \left\{\begin{pmatrix}
1 & * & \cdots & * \\
0 & 1 & \cdots & * \\
\vdots & \vdots & \ddots & \vdots \\
0 & 0 & \cdots & 1
\end{pmatrix}\right\}.
\end{equation*}
If $\psi_R: R \to \C^\times$ is an additive character on $R$, then it can be extended to a character $\psi_R$ of $U_n(R)$, defined by
$$\psi_R((u_{ij})) = \psi_R(u_{12}+u_{23}+\cdots+u_{n-1,n}),$$
for $(u_{ij}) \in U_n(R)$.

\subsection{Level zero representations}\label{sec:level-zero} A representation $\pi$ of $G_n(F)$ is of \emph{level zero} if there exists an irreducible cuspidal representation $\sigma$ of $G_n(\mathfrak{f})$ such that
$$\pi \cong \mathrm{ind}_{F^\times G_n(\mathfrak{o})}^{G_n(F)} \Lambda,$$
where $\Lambda$ is a representation of $F^\times G_n(\mathfrak{o})$ such that $\Lambda|_{G_n(\mathfrak{o})}$ is an inflation of $\sigma$ via $G_n(\mathfrak{o}) \xrightarrow{\mod{\mathfrak{p}}} G_n(\mathfrak{f})$. Here, $\mathrm{ind}$ is smooth compact induction. By theory of types \cite{BushnellKutzko93}, $\pi$ is irreducible supercuspidal.

Let $\omega_\sigma$ be the central character of $\sigma$ and $\omega_\Lambda$ be the central character of $\Lambda$. Since $\Lambda|_{G_n(\mathfrak{o})} = \sigma$, $\omega_\Lambda|_{\mathfrak{o}^\times} = \omega_\sigma|_{\mathfrak{o}^\times}$. Thus, given $\sigma$, $\omega_\Lambda$ is uniquely determined by the complex number $\lambda = \omega_\Lambda(\varpi) \in \C^\times$, where $\varpi$ is a uniformizer of $F$. From a level zero representation $\pi$, we can the get a pair $(\lambda, \sigma)$. Conversely, because $\Lambda$ is determined by $\omega_\Lambda$ and $\sigma$, such a pair $(\lambda, \sigma)$ defines a level zero representation. By \cite[Theorem 8.4.1]{BushnellKutzko93}, the correspondence between the set of level zero representations of $G_n(F)$ and the set of pairs $(\lambda, \sigma)$ where $\lambda \in \C^\times$ and $\sigma$ is an irreducible cuspidal representation of $G_n(\mathfrak{f})$ is a bijection. Henceforth, by a level zero representation $\pi$ coming from $(\lambda, \sigma)$, we mean that $\pi$ corresponds to $(\lambda, \sigma)$ in this bijection. We sometime will just say a level zero representation $\pi$ is from $\sigma$, when the missing $\lambda$ won't cause any confusion.

Let $|\cdot|$ be the absolute value on $F$ such that $|\varpi|=q^{-1}$. For any complex number $s$, $|\cdot|^s$ is a one dimensional representation of $G_n(F)$ given by $|\det(g)|^s$.

\begin{lemma}\label{lem:level-zero-repn}
Let $\pi$ be a level zero representation of $G_n(F)$ coming from $(\lambda, \sigma)$, where $\lambda \in \C^\times$ and $\sigma$ is an irreducible cuspidal representation of $G_n(\mathfrak{f})$. Then,
\begin{enumerate}[(1)]
\item The contragradient $\tilde{\pi}$ is a level zero representation from $(\lambda^{-1}, \tilde{\sigma})$.
\item For any complex number $s$, $\pi \otimes |\cdot|^s$ is a level zero representation from $(\lambda q^{-ns}, \sigma)$.
\end{enumerate}
\end{lemma}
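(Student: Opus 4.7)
The plan is to use the Bushnell--Kutzko bijection recalled just above the lemma: to identify a level zero representation $\rho$ of $G_n(F)$ with a pair $(\lambda',\sigma')$, it suffices to display $\rho\cong\mathrm{ind}_{F^\times G_n(\mathfrak{o})}^{G_n(F)}\Lambda'$ for some $\Lambda'$ whose restriction to $G_n(\mathfrak{o})$ inflates $\sigma'$ and which sends $\varpi I_n$ to $\lambda'$. In each part I only have to place the candidate representation in this normal form and read off the two invariants.

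For (2), I would invoke the standard projection formula
\[
\bigl(\mathrm{ind}_H^G \Lambda\bigr)\otimes\chi \;\cong\; \mathrm{ind}_H^G\bigl(\Lambda\otimes(\chi|_H)\bigr)
\]
for compact induction and a smooth character $\chi$ of $G$, applied with $H=F^\times G_n(\mathfrak{o})$, $G=G_n(F)$, and $\chi=|\cdot|^s$. Since $\det h\in\mathfrak{o}^\times$ for $h\in G_n(\mathfrak{o})$, the character $|\cdot|^s$ is trivial on $G_n(\mathfrak{o})$, so $\Lambda\otimes(\chi|_H)$ still restricts to $\sigma$; at the central element $\varpi I_n$ it takes the value $\lambda\cdot|\varpi|^{ns}=\lambda q^{-ns}$, which is exactly the claim.

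For (1), the main point is to pin down the underlying cuspidal type of $\tilde\pi$. Since $\pi$ is irreducible supercuspidal it is admissible, so $\tilde\pi$ is again irreducible supercuspidal, hence level zero. By the universal property of compact induction, the identity on $\pi=\mathrm{ind}_H^G\Lambda$ corresponds to a nonzero $H$-map $\Lambda\to\pi|_H$; further restricting yields a nonzero, hence injective, $G_n(\mathfrak{o})$-map $\sigma\hookrightarrow\pi|_{G_n(\mathfrak{o})}$. Pairing with $\tilde\pi$ then produces $\tilde\sigma$ as a $G_n(\mathfrak{o})$-quotient of $\tilde\pi|_{G_n(\mathfrak{o})}$; because smooth representations of $G_n(\mathfrak{o})$ are semisimple (they factor through the finite quotients $G_n(\mathfrak{o}/\mathfrak{p}^k)$, where Maschke applies), $\tilde\sigma$ also embeds into $\tilde\pi|_{G_n(\mathfrak{o})}$. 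The Bushnell--Kutzko bijection then forces the cuspidal type of $\tilde\pi$ to be exactly $\tilde\sigma$, and $\omega_{\tilde\pi}=\omega_\pi^{-1}$ gives $\omega_{\tilde\pi}(\varpi I_n)=\lambda^{-1}$.

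The only slightly delicate point is the semisimplicity-and-uniqueness argument that fixes the cuspidal type of $\tilde\pi$ in (1); but the uniqueness of the type for a level zero supercuspidal is part of the Bushnell--Kutzko bijection cited in the statement, so once it is invoked there is no serious obstacle.
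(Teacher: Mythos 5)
Your proof of part (2) matches the paper's: the paper uses the same isomorphism $\left(\mathrm{ind}_{F^\times G_n(\mathfrak{o})}^{G_n(F)}\Lambda\right)\otimes|\cdot|^s \cong \mathrm{ind}_{F^\times G_n(\mathfrak{o})}^{G_n(F)}\left(\Lambda\otimes|\cdot|^s\right)$ and reads off the data, exactly as you do.

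For part (1), however, you take a genuinely different route. The paper simply invokes the companion functorial fact
$$\left(\mathrm{ind}_{F^\times G_n(\mathfrak{o})}^{G_n(F)}\Lambda\right)^{\tilde{}} \cong \mathrm{ind}_{F^\times G_n(\mathfrak{o})}^{G_n(F)}\tilde{\Lambda},$$
which holds because $F^\times G_n(\mathfrak{o})$ is open and compact mod center and $\Lambda$ is finite-dimensional (so compact and full induction agree and the induced representation is admissible); one then reads off $\tilde\Lambda|_{G_n(\mathfrak{o})}=\tilde\sigma$ and $\omega_{\tilde\Lambda}(\varpi)=\lambda^{-1}$. Your argument instead bypasses this identity: you use Frobenius reciprocity to exhibit $\sigma\hookrightarrow\pi|_{K_n}$, dualize and use semisimplicity of smooth $K_n$-representations to get $\tilde\sigma\hookrightarrow\tilde\pi|_{K_n}$, and then invoke the uniqueness statement in the Bushnell--Kutzko type correspondence to conclude that the cuspidal type of $\tilde\pi$ must be $\tilde\sigma$. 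This is correct, but it leans on the full strength of the type-uniqueness theorem, which is considerably heavier than the one-line duality isomorphism the paper uses. The paper's approach is more economical and symmetric with part (2); yours has the minor advantage of not needing to know the precise form of $(\mathrm{ind}\,\Lambda)^{\tilde{}}$, relying only on the classification of level zero representations already quoted in the statement.
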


\begin{proof}
These are consequences of the facts
$$\left(\mathrm{ind}_{F^\times G_n(\mathfrak{o})}^{G_n(F)} \Lambda\right)^{\tilde{}} \cong \mathrm{ind}_{F^\times G_n(\mathfrak{o})}^{G_n(F)} \tilde{\Lambda},$$
and
$$\left(\mathrm{ind}_{F^\times G_n(\mathfrak{o})}^{G_n(F)} \Lambda\right) \otimes |\cdot|^s \cong \mathrm{ind}_{F^\times G_n(\mathfrak{o})}^{G_n(F)} \left(\Lambda \otimes |\cdot|^s\right). $$
\end{proof}

\begin{corollary}\label{cor:l-function}
Let $\pi_i$ be level zero representations of $G_n(F)$ coming from $(\lambda_i, \sigma_i)$, where $\lambda_i \in \C^\times$ and $\sigma_i$ is an irreducible cuspidal representation of $G_n(\mathfrak{f})$, for $i = 1, 2$. If $\sigma_1 \cong \tilde{\sigma}_2$, then
$$L(s, \pi_1 \times \pi_2) = (1-\lambda_1\lambda_2q^{ns})^{-1}.$$
\end{corollary}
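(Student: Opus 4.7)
The plan is to invoke the structural description of Rankin--Selberg $L$-factors due to Jacquet--Piatetski-Shapiro--Shalika \cite{JacquetPiShShalika83}: for a pair of irreducible supercuspidal representations $\pi_1,\pi_2$ of $G_n(F)$, the reciprocal $L(s,\pi_1\times\pi_2)^{-1}$ is a polynomial in $q^{-s}$ with constant term $1$, equal to $1$ unless there exists $s_0\in\C$ with $\pi_2\otimes|\cdot|^{s_0}\cong\tilde\pi_1$; in that case the reciprocal roots of the polynomial are precisely the values $q^{s_0}$ as $s_0$ ranges over such exceptional parameters modulo $\tfrac{2\pi i}{\log q}\Z$. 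Since level zero representations are by construction irreducible supercuspidal, this structure theorem applies to our pair $(\pi_1,\pi_2)$.

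The computational core is to pin down this set of $s_0$ using \Cref{lem:level-zero-repn}. By part (1), $\tilde\pi_1$ is the level zero representation attached to $(\lambda_1^{-1},\tilde\sigma_1)$; by part (2), $\pi_2\otimes|\cdot|^{s_0}$ is attached to $(\lambda_2 q^{-ns_0},\sigma_2)$. Because the assignment $(\lambda,\sigma)\mapsto\pi$ is a bijection, the required isomorphism is equivalent to the conjunction $\tilde\sigma_1\cong\sigma_2$---which matches the hypothesis $\sigma_1\cong\tilde\sigma_2$---together with the scalar equation $\lambda_2 q^{-ns_0}=\lambda_1^{-1}$, i.e.\ $q^{ns_0}=\lambda_1\lambda_2$.

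This last equation has exactly $n$ solutions modulo $\tfrac{2\pi i}{\log q}\Z$; writing them as $s_0^{(k)}$ with $q^{s_0^{(k)}}=(\lambda_1\lambda_2)^{1/n}\zeta^{k}$, where $\zeta$ is a primitive $n$-th root of unity and $k=0,\ldots,n-1$, the Euler product collapses telescopically,
$$L(s,\pi_1\times\pi_2)^{-1}=\prod_{k=0}^{n-1}\bigl(1-q^{s_0^{(k)}}q^{-s}\bigr)=\prod_{k=0}^{n-1}\bigl(1-(\lambda_1\lambda_2)^{1/n}\zeta^{k}q^{-s}\bigr)=1-\lambda_1\lambda_2 q^{-ns},$$
and inverting gives the stated formula. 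No step here is genuinely delicate: the JPSS structure theorem is the only non-trivial ingredient, and the remainder is direct bookkeeping with \Cref{lem:level-zero-repn} together with the fact that $\prod_{k=0}^{n-1}(1-\alpha\zeta^{k} X)=1-\alpha^n X^n$.
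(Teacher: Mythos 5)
Your proof is correct and takes essentially the same route as the paper's: identify the poles via the JPSS structure theorem, translate the twist-isomorphism condition through \Cref{lem:level-zero-repn} into the scalar equation $q^{ns_0}=\lambda_1\lambda_2$, and let the $n$ solutions modulo $\tfrac{2\pi i}{\log q}\Z$ telescope the Euler factor. Note that your final expression $L(s,\pi_1\times\pi_2)=(1-\lambda_1\lambda_2 q^{-ns})^{-1}$ carries the correct sign in the exponent; the $q^{ns}$ printed in the statement of the corollary is a typo, as the paper itself later uses $(1-\lambda_1\lambda_2 q^{-ns})^{-1}$.
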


\begin{proof}
$L(s, \pi_1 \times \pi_2)$ has a pole at $s_0$ if and only if $\pi_1 \otimes |\cdot|^{s_0} \cong \tilde{\pi}_2$. By \Cref{lem:level-zero-repn}, $\pi_1 \otimes |\cdot|^{s_0} \cong \tilde{\pi}_2$ is equivalent to $\lambda_1\lambda_2 = q^{ns_0}$. There will be $n$ poles, and if we fix one such pole $s_0$, these $n$ poles are
$$s_0, s_0 e^{\frac{2\pi i}{n\ln q}}, \dots, s_0 e^{\frac{2\pi i (n-1)}{n\ln q}}.$$
Therefore,
\begin{equation*}
\begin{split}
L(s, \pi_1 \times \pi_2) &= \prod_{k = 0}^{n-1} \left(1 - q^{s_0 - s}e^{\frac{2\pi ik}{n}}\right)^{-1}
= (1 - q^{ns_0-ns})^{-1} \\
&= (1 - \lambda_1\lambda_2q^{ns})^{-1}.
\end{split}
\end{equation*}
\end{proof}

\subsection{Gamma factors}\label{sec:gamma}

For simplicity, we consider only supercuspidal representations and content ourselves with minimal definitions and facts. We have to point out in advance that our choices of Haar measures are different from the ``standard'' ones. We choose the Haar measure $dx$ on $F$ so that $\int_{\mathfrak{p}} dx = 1$, and choose the Haar measure $d^\times x$ on $F^\times$ so that $1+\mathfrak{p}$ has volume $1$. We then normalize all other Haar measures accordingly.

We follow \cite{JacquetPiShShalika83} to define Rankin-Selberg gamma factors over $F$. Let $\pi$ and $\pi^\prime$ be irreducible supercuspidal representations of $G_n(F)$ and $G_m(F)$ respectively. Let $\mathcal{W}(\pi, \psi)$ be the Whittaker model of $\pi$ with respect to $\psi$. Similarly, we have the Whittaker model $\mathcal{W}(\pi^\prime, \psi^{-1})$ for $\pi^\prime$ with respect to $\psi^{-1}$.

For the case $n > m$, one can take any $W \in \mathcal{W}(\pi, \psi)$ and $W^\prime \in \mathcal{W}(\pi^\prime, \psi^{-1})$ to form the local Rankin-Selberg integrals:

\begin{equation*}
\begin{split}
\Psi(s; W, W^\prime) &= \int W\left(\begin{array}{cc}h & \\ & I_{n-m}\end{array}\right)W^\prime(h)|\det h|^{s - (n-m)/2} \, dh \\
\widetilde{\Psi}(s; W, W^\prime) &= \iint W\left(\begin{array}{ccc}h & & \\ x & I_{n-m-1} & \\ & & 1\end{array}\right)W^\prime(h) |\det h|^{s - (n-m)/2} \, dxdh
\end{split}
\end{equation*}
In the above integrations, $h$ is over $U_m(F) \backslash G_m(F)$ and $x$ is over $M_{n-m-1, m}(F)$. These integrals converge for $\mathrm{Re}\, s \gg 0$, have analytic continuations to $\C$ and satisfy a functional equation, see \cite{JacquetPiShShalika83}. The gamma factor $\gamma(s, \pi \times \pi^\prime, \psi)$ connects two integrals in the functional equation
\begin{equation}
\widetilde{\Psi}(1-s; \rho(w_{n, m})\widetilde{W}, \widetilde{W^\prime}) = \gamma(s, \pi \times \pi^\prime, \psi) \Psi(s; W, W^\prime),
\label{eqn:local-gamma-GnXGm}
\end{equation}
where
\begin{equation*}
w_{n, m} = \begin{pmatrix}
I_m & \\
& w_{n-m}
\end{pmatrix}
\text{ with }
w_{n-m} = \begin{pmatrix}
& & 1 \\
& \iddots & \\
1 & &
\end{pmatrix} \in G_{n-m}(F).
\end{equation*}

For the case $n=m$, one must introduce another parameter $\phi \in \mathcal{S}(F^n)$, which is the space of Schwartz functions on $F^n$. The gamma factor in this case comes from integrals
\begin{equation*}
\Psi(s; W, W^\prime, \phi) = \int_{U_n(F) \backslash G_n(F)} W(g)W^\prime(g)\phi(e_ng)|\det g|^s \, dg,
\end{equation*}
where $e_n = (0, \dots, 0, 1) \in F^n$. From \cite{JacquetPiShShalika83} again, these integrals converge for $\mathrm{Re}\, s \gg 0$, can be analytically continued to $\C$, and satisfy a function equation. The corresponding functional equation gives us the desired gamma factor:
\begin{equation}
\Psi(1-s; \widetilde{W}, \widetilde{W^\prime}, \widehat{\phi}) = \gamma(s, \pi \times \pi^\prime, \psi)\Psi(s; W, W^\prime, \phi),
\label{eqn:local-gamma-GnXGn}
\end{equation}
where
\begin{equation*}
\widehat{\phi}(x) = \int_{F^n} \phi(y)\psi(<x, y>) \, dy.
\end{equation*}
Here, $<x, y>$ is the usual inner product on the vector space $F^n$, and $dy$ is the Haar measure on $F^n$ coming from the Haar measure $dx$ on $F$ such that $\int_\mathfrak{p} \, dx = 1$. 

Gamma factors for finite fields are defined in a very similar way, only with integrals replaced by finite sums. An irreducible representation $\sigma$ of $G_n(\mathfrak{f})$ is \emph{cuspidal} if it has no $U_n(\mathfrak{f})$-fixed vectors, and it has Whittaker model, see \cite{Gelfand70}. Let $\sigma$ be an irreducible cuspidal representation of $G_n(\mathfrak{f})$ with Whittaker model $\mathcal{W}(\sigma, \psi)$, and let $\sigma^\prime$ be an irreducible cuspidal of $G_m(\mathfrak{f})$ with Whittaker model $\mathcal{W}(\sigma^\prime, \psi^{-1})$.

In the case where $n > m$, we refer readers to Piatetski-Shapiro's unpublished notes \cite{PiatetskiShapiro1976}, Theorem 5.1 and 5.4 of \cite{Roditty10} and Theorem 2.10 of \cite{Nien14} for details. The factor $\gamma(\sigma \times \sigma^\prime, \psi)$ is defined in the equation
\begin{equation}
\begin{split}
\gamma(\sigma \times \sigma^\prime, \psi)&\sum_{h \in U_m(\mathfrak{f}) \backslash G_m(\mathfrak{f})}
W\begin{pmatrix}h & \\ & I\end{pmatrix} W^\prime(h) \\
&= \sum_{\substack{h \in U_m(\mathfrak{f}) \backslash G_m(\mathfrak{f}) \\ x \in M_{m,n-m-1}(\mathfrak{f})}} W\begin{pmatrix}& 1 & \\ & & I \\ h & & x\end{pmatrix} W^\prime(h).
\end{split}
\label{eqn:finite-gamma-GnXGm}
\end{equation}
for any $W \in \mathcal{W}(\sigma, \psi)$ and $W^\prime \in \mathcal{W}(\sigma^\prime, \psi^{-1})$.

In the case where $n = m$, taking into account of another parameter $\phi \in C_0(\mathfrak{f}^n)$, the set of $\C$-valued functions on $\mathfrak{f}^n$ valued $0$ at $0 \in \mathfrak{f}^n$, we have
\begin{equation}
\begin{split}
\gamma(\sigma \times \sigma^\prime, \psi) & \sum_{g \in U_n(\mathfrak{f}) \backslash G_n(\mathfrak{f})} W(g)W^\prime(g)\phi(e_n g) \\
&= \sum_{g \in U_n(\mathfrak{f}) \backslash G_n(\mathfrak{f})}W(g)W^\prime(g)\widehat{\phi}(e_1\, ^tg^{-1}),
\end{split}
\label{eqn:finite-gamma-GnXGn}
\end{equation}
where $e_1 = (1, 0, \dots, 0)$, and
\begin{equation*}
\widehat{\phi}(x) = \sum_{y \in \mathfrak{f}^n} \phi(y)\psi(<x, y>).
\end{equation*}
Piateski-Shapiro showed in his lecture \cite{PiatetskiShapiro1976} that $\gamma(\sigma \times \sigma^\prime, \psi)$ in \Cref{eqn:finite-gamma-GnXGn} is well-defined. Since his notes were never published, we shall include a proof of a special case that we need.

\begin{theorem}\label{thm:def-gamma-GnXGn}
Let $\sigma$ and $\tau$ be an irreducible cuspidal representations of $G_n(\mathfrak{f})$. There exist a unique complex number $\gamma(\sigma \times \tau, \psi)$ such that \Cref{eqn:finite-gamma-GnXGn} holds for any $W \in \mathcal{W}(\sigma, \psi)$, $W^\prime \in \mathcal{W}(\sigma^\prime, \psi^{-1})$ and any function $\phi$ on $\mathfrak{f}^n$.
\end{theorem}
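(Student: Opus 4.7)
The plan is to view each side of \Cref{eqn:finite-gamma-GnXGn} as a trilinear form
$$B_i\colon \mathcal{W}(\sigma,\psi) \otimes \mathcal{W}(\sigma',\psi^{-1}) \otimes C_0(\mathfrak{f}^n) \longrightarrow \mathbb{C}, \qquad i=1,2,$$
with $B_1$ given by the left-hand sum and $B_2$ by the right-hand sum, and to show that any $G_n(\mathfrak{f})$-invariant trilinear form of this type lives in a space of dimension at most one. Uniqueness and existence of $\gamma(\sigma\times\sigma',\psi)$ then follow from proportionality together with $B_1\not\equiv 0$. The restriction to $C_0(\mathfrak{f}^n)$ costs nothing, since $\phi(0)$ never appears in either sum: the arguments $e_n g$ and $e_1 \,{}^tg^{-1}$ are nonzero as $g$ ranges over $G_n(\mathfrak{f})$.

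First I would check invariance of both forms under the diagonal action
$$g_0 \cdot (W\otimes W'\otimes \phi) = \sigma(g_0)W \,\otimes\, \sigma'(g_0)W' \,\otimes\, \rho(g_0)\phi, \quad \rho(g_0)\phi(y)=\phi(yg_0^{-1}).$$
For $B_1$ this is immediate from the change of variables $g \mapsto g g_0^{-1}$. For $B_2$, the same substitution produces $\hat\phi(e_1 \,{}^tg^{-1}\,{}^tg_0)$, and an elementary computation yields $\widehat{\rho(g_0)\phi}(x) = \hat\phi(x\,{}^tg_0)$, so invariance passes through the Fourier transform.

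Next, the right action of $G_n(\mathfrak{f})$ on $\mathfrak{f}^n\setminus\{0\}$ is transitive, with stabilizer of $e_n$ equal to the mirabolic subgroup $P_n(\mathfrak{f})$ (matrices whose last row is $(0,\dots,0,1)$). This yields a $G_n(\mathfrak{f})$-equivariant isomorphism $C_0(\mathfrak{f}^n) \cong \mathrm{ind}_{P_n(\mathfrak{f})}^{G_n(\mathfrak{f})} \mathbb{C}$, and Frobenius reciprocity identifies the ambient space of invariant forms with $\mathrm{Hom}_{P_n(\mathfrak{f})}(\sigma\otimes\sigma',\mathbb{C})$. The crucial input, which I expect to be the main technical obstacle, is the classical theorem of Gelfand: the restriction of any irreducible cuspidal representation of $G_n(\mathfrak{f})$ to $P_n(\mathfrak{f})$ is isomorphic to the Gelfand--Graev representation $\mathrm{ind}_{U_n(\mathfrak{f})}^{P_n(\mathfrak{f})}\psi$, and is in particular irreducible. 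Schur's lemma then bounds the Hom space by one.

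Hence $B_1$ and $B_2$ are proportional, which is \Cref{eqn:finite-gamma-GnXGn} with a well-defined scalar $\gamma(\sigma\times\sigma',\psi)$. To see that this scalar is genuinely determined, I would verify $B_1 \not\equiv 0$: taking $\phi$ to be the indicator of a single vector $e_n g_0$ and choosing $W, W'$ (using nonvanishing of Whittaker models together with cuspidality) so that $W(g_0)W'(g_0) \ne 0$ makes $B_1(W,W',\phi) \ne 0$. Everything other than the Gelfand irreducibility is bookkeeping with change of variables and Frobenius reciprocity.
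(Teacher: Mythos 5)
Your proposal is correct and follows essentially the same route as the paper's own proof: both reduce uniqueness of the $G_n(\mathfrak{f})$-invariant trilinear form on $\sigma\otimes\tau\otimes C_0(\mathfrak{f}^n)$ to showing $\dim\mathrm{Hom}_{P_n(\mathfrak{f})}(\sigma\otimes\tau,\mathbb{C})\le 1$, via Gelfand's theorem that an irreducible cuspidal representation restricts irreducibly (to the Gelfand--Graev representation) on the mirabolic $P_n(\mathfrak{f})$, followed by Schur's lemma. The only presentational differences are that you package the reduction via Frobenius reciprocity applied to $C_0(\mathfrak{f}^n)\cong\mathrm{ind}_{P_n(\mathfrak{f})}^{G_n(\mathfrak{f})}\mathbb{C}$, where the paper carries it out by hand by writing $\phi$ as a linear combination of translates of the indicator of $e_n$, and that you add the (correct and genuinely needed) verification that the left-hand form is nonzero, a point the paper leaves implicit.
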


\begin{proof}
Let $C_0(\mathfrak{f}^n)$ be the set of $\C$-valued functions $\phi$ on $\mathfrak{f}^n$ such that $\phi(0)=0$. $G_n(\mathfrak{f})$ can acts on $C_0(\mathfrak{f}^n)$ by right multiplication:
$$(R(g)\phi)(x) = \phi(xg),$$
for $g \in G_n(\mathfrak{f})$ and $\phi \in C_0(\mathfrak{f}^n)$. If we set
$$L_1(W, W^\prime, \phi) = \sum_{g \in U_n(\mathfrak{f}) \backslash G_n(\mathfrak{f})} W(g)W^\prime(g)\phi(e_n g),$$
and
$$L_2(W, W^\prime, \phi) = \sum_{g \in U_n(\mathfrak{f}) \backslash G_n(\mathfrak{f})}W(g)W^\prime(g)\widehat{\phi}(e_1\, ^tg^{-1}),$$
we can check $L_1$ and $L_2$ are $G_n(\mathfrak{f})$-invariant trilinear forms on $\sigma \otimes \tau \otimes C_0(\mathfrak{f}^n)$. It is then enough to show that such forms are unique up to scalars. Let $(v, w, \phi)$ be a $G_n(\mathfrak{f})$-invariant trilinear form on $\sigma \otimes \tau \otimes C_0(\mathfrak{f}^n)$. Let $\phi_n$ be the indication function of $e_n$, and consider a bilinear form
$$(v, w)_{P_n} = (v, w, \phi_n)$$
on $\sigma \otimes \tau$. Since $\phi_n$ is fixed by $P_n(\mathfrak{f})$, the group of matrices with last row being $(0, \dots, 0, 1)$, $(v, w)_{P_n}$ is a $P_n(\mathfrak{f})$-invariant bilinear form on $\sigma \otimes \tau$. Conversely, a $P_n(\mathfrak{f})$-invariant bilinear form determines uniquely a $G_n(\mathfrak{f})$-invariant trilinear form: for any $\phi \in C_0(\mathfrak{f}^n)$, since $G_n(\mathfrak{f})$ acts transitively $\mathfrak{f}^n \backslash \{0\}$, we can write
$$\phi = \sum_{g \in G_n(\mathfrak{f}) / P_n(\mathfrak{f})}c_g R(g)\phi_n,$$
for some $c_g \in \C$. We can then define $(v, w, \phi)$ on $\sigma \otimes \tau \otimes C_0(\mathfrak{f}^n)$ as
$$(v, w, \phi) = \sum_{g \in G_n(\mathfrak{f}) / P_n(\mathfrak{f})} c_g \cdot (g^{-1}v, g^{-1}w)_{P_n}.$$
$(v, w, \phi)$ can be easily checked to be $G_n(\mathfrak{f})$-invariant. Therefore, it suffices to prove that such $P_n(\mathfrak{f})$-invariant bilinear forms are unique up to constants.

Since $\sigma$ and $\tau$ are cuspidal, $\sigma|_{P_n(\mathfrak{f})} = \mathrm{Ind}_{U_n(\mathfrak{f})}^{P_n(\mathfrak{f})} \psi$ and $\tau|_{P_n(\mathfrak{f})} = \mathrm{Ind}_{U_n(\mathfrak{f})}^{P_n(\mathfrak{f})} \psi^{-1}$ are irreducible, see for example \cite[Theorem 2.3]{Gelfand70}. Since a $P_n(\mathfrak{f})$-invariant bilinear form identifies $\sigma$ with the contragradient of $\tau$, by Schur's lemma,
\begin{equation*}
\dim \mathrm{Hom}_{P_n(\mathfrak{f})}(\sigma \otimes \tau, \C) \le 1,
\end{equation*}
viewing $\C$ as a trivial representation of $P_n(\mathfrak{f})$.
\end{proof}

\subsection{Paskunas-Stevens Whittaker functions}\label{sec:explicit-whittaker}

Suppose from now on that $\pi$ is a level zero representation of $G_n(F)$ coming from an irreducible cuspidal representation $\sigma$ of $G_n(\mathfrak{f})$, as defined in \Cref{sec:introduction} (see \cite{BushnellKutzko93,PaskunasStevens08} for more details). We write $K_n = G_n(\mathfrak{o})$, and denote $\overline{k}$ the image of $k$ under the natural map $K_n = G_n(\mathfrak{o}) \xrightarrow{\mod{\mathfrak{p}}} G_n(\mathfrak{f})$. By Theorem 5.8 of \cite{PaskunasStevens08}, there is a Whittaker function $W_\pi$ of $\pi$ with respect to $\psi$ such that
\begin{enumerate}[(1)]
\item\label{p1} $\mathrm{Supp} W_{\pi} \subseteq U_n(F)F^\times K_n$, and 
\begin{equation}
W_{\pi}(ug) = \psi(u) \mathcal{J}_{\pi}(g) \text{ for any } u \in U_n(F), g \in F^\times K_n,
\label{eqn:explicit-whittaker}
\end{equation}
where for $g=ak \in F^\times K_n$ with $a \in F^\times$ and $k \in K_n$,
\begin{equation}
\mathcal{J}_{\pi}(g) = \omega_{\pi}(a)|U_n(\mathfrak{f})|^{-1}\sum_{h \in U_n(\mathfrak{f})} \psi(h^{-1})\mathrm{tr}_\sigma(\overline{k}h),
\label{eqn:local-bessel}
\end{equation}
where $\omega_\pi$ is the central character of $\pi$, and $\mathrm{tr}_{\sigma}$ is the trace of $\sigma$. Note that by definition of level zero representations, $\mathcal{J}_\pi$ and $\omega_\pi$ agree on $F^\times \cap K_n$.

\item\label{p2} If $P_n$ is the mirabolic subgroup of $G_n(F)$, i.e. the group of matrices with last row being $(0, \dots, 0, 1)$, then $\mathrm{Supp} W_\pi \cap P_n = U_n(F)(H_n^1 \cap P_n)$, and for $u \in U_n(F)$ and $h \in H_n^1 \cap P_n$,
$$W_\pi(uh) = \psi(u).$$
Here, $H_n^1$ is the subgroup of $K_n$ consisting of matrices that will reduce to identity modulo $\mathfrak{p}$. 
\end{enumerate}

Since $\sigma$ is an irreducible cuspidal representation of $G_n(\mathfrak{f})$, by \cite{Gelfand70}, it has a unique Bessel function $J_{\sigma, \psi}$ with respect to $\psi$ such that $J_{\sigma, \psi}(I)=1$. Moreover, by Proposition 4.5 of \cite{Gelfand70}, for $g \in G_n(\mathfrak{f})$
\begin{equation}
J_{\sigma, \psi}(g) = |U_n(\mathfrak{f})|^{-1}\sum_{h \in U_n(\mathfrak{f})} \psi(h^{-1})\mathrm{tr}_\sigma(gh).
\label{eqn:finite-bessel}
\end{equation}
Therefore, from \Cref{eqn:local-bessel,eqn:finite-bessel}, we have
\begin{lemma}\label{lem:bessels-relation} For $a \in F^\times$ and $k \in K_n$,
\begin{equation*}
\mathcal{J}_\pi(ak) = \omega_\pi(a) J_{\sigma, \psi}(\overline{k}).
\end{equation*}
\end{lemma}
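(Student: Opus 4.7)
The statement is essentially an immediate identification of the two explicit formulas already displayed, so my plan is simply to match them term by term and record the one well-definedness check that is implicit.

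First I would set $g = \overline{k} \in G_n(\mathfrak{f})$ in the Bessel function formula \eqref{eqn:finite-bessel} to obtain
\[
J_{\sigma,\psi}(\overline{k}) \;=\; |U_n(\mathfrak{f})|^{-1}\sum_{h \in U_n(\mathfrak{f})} \psi(h^{-1})\,\mathrm{tr}_\sigma(\overline{k} h).
\]
On the other side, the Paskunas--Stevens formula \eqref{eqn:local-bessel} for $g = ak$ with $a \in F^\times$ and $k \in K_n$ reads
\[
\mathcal{J}_\pi(ak) \;=\; \omega_\pi(a)\, |U_n(\mathfrak{f})|^{-1}\sum_{h \in U_n(\mathfrak{f})} \psi(h^{-1})\,\mathrm{tr}_\sigma(\overline{k} h).
\]
Comparing the two right-hand sides gives $\mathcal{J}_\pi(ak) = \omega_\pi(a)\,J_{\sigma,\psi}(\overline{k})$, which is exactly the claim.

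The only potentially non-trivial issue is that the decomposition $g = ak$ with $a \in F^\times$ and $k \in K_n$ is not unique: one can replace $(a,k)$ by $(au, u^{-1}k)$ for any $u \in \mathfrak{o}^\times$. I would check briefly that both sides are insensitive to this ambiguity. On one hand, $\omega_\pi(au) = \omega_\pi(a)\omega_\pi(u)$; on the other hand, $\overline{u}$ is a scalar in $\mathfrak{f}^\times$, central in $G_n(\mathfrak{f})$, so
\[
\mathrm{tr}_\sigma(\overline{u}^{-1}\overline{k} h) \;=\; \omega_\sigma(\overline{u})^{-1}\,\mathrm{tr}_\sigma(\overline{k} h).
\]
The compatibility $\omega_\pi|_{\mathfrak{o}^\times} = \omega_\Lambda|_{\mathfrak{o}^\times} = \omega_\sigma|_{\mathfrak{o}^\times}$, noted in Section~\ref{sec:level-zero} as part of the definition of level zero representations, forces $\omega_\pi(u) = \omega_\sigma(\overline{u})$, and the two scalars cancel.

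There is no real obstacle here; the lemma is purely a bookkeeping comparison of \eqref{eqn:local-bessel} and \eqref{eqn:finite-bessel}, packaged so that it can be invoked cleanly in the computations of the Rankin--Selberg integrals in the sequel.
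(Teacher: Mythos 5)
Your proof is correct and takes essentially the same approach as the paper, which states the lemma as an immediate comparison of \eqref{eqn:local-bessel} and \eqref{eqn:finite-bessel} with no further argument. The well-definedness check you add for the decomposition $g = ak$ is a sensible extra detail that the paper only gestures at (``$\mathcal{J}_\pi$ and $\omega_\pi$ agree on $F^\times \cap K_n$''), but it does not change the route.
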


\section{$G_n \times G_m$ case}\label{sec:GnXGm}

In this section, we are going to investigate gamma factors for $G_n \times G_m$, where $n > m$. 

\begin{theorem}\label{thm:GnXGm}
Let $\pi$ and $\pi^\prime$ be depth zero representations coming from $\sigma$ and $\sigma^\prime$ respectively. Then
\begin{equation*}
\gamma(s, \pi \times \pi^\prime, \psi) = \gamma(\sigma \times \sigma^\prime, \psi).
\end{equation*}
\end{theorem}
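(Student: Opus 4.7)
The strategy is to compute both sides of the local functional equation \eqref{eqn:local-gamma-GnXGm} for the specific choices $W = W_\pi$, $W' = W_{\pi'}$ of Paskunas-Stevens Whittaker functions from \Cref{sec:explicit-whittaker}, and to verify that each of the $p$-adic integrals collapses into a finite sum matching the corresponding side of \eqref{eqn:finite-gamma-GnXGm} for $\sigma$ and $\sigma'$. Because the factor $|\det h|^{s-(n-m)/2}$ will disappear once we confine $h$ to $K_m$, this approach will simultaneously show that $\gamma(s, \pi \times \pi', \psi)$ is independent of $s$ and equal to $\gamma(\sigma \times \sigma', \psi)$.

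For $\Psi(s; W_\pi, W_{\pi'})$, property (\ref{p1}) of the Paskunas-Stevens functions tells us that $W_{\pi'}(h) \ne 0$ forces $h \in U_m(F) F^\times K_m$, while $W_\pi(\mathrm{diag}(h, I_{n-m})) \ne 0$ forces $\mathrm{diag}(h, I_{n-m}) \in U_n(F) F^\times K_n$. Iwasawa-decomposing $h = u \cdot \varpi^a I_m \cdot k$ and analysing when $\mathrm{diag}(\varpi^a I_m, I_{n-m})$ lies in $U_n(F) F^\times K_n$ shows that one must have $a = 0$, so the joint support of the integrand is exactly $U_m(F) K_m$. There $|\det h| = 1$, the $s$-factor disappears, and $U_m(F) \backslash U_m(F) K_m \cong U_m(\mathfrak{o}) \backslash K_m$ projects onto $U_m(\mathfrak{f}) \backslash G_m(\mathfrak{f})$. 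Using \Cref{lem:bessels-relation} to replace $\mathcal{J}_\pi, \mathcal{J}_{\pi'}$ with the finite Bessel functions $J_{\sigma, \psi}, J_{\sigma', \psi^{-1}}$, one obtains
$$\Psi(s; W_\pi, W_{\pi'}) = c \sum_{h \in U_m(\mathfrak{f}) \backslash G_m(\mathfrak{f})} J_{\sigma, \psi}\bigl(\mathrm{diag}(h, I_{n-m})\bigr) J_{\sigma', \psi^{-1}}(h)$$
for an explicit constant $c$ determined by the Haar-measure conventions of \Cref{sec:gamma}.

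For $\widetilde{\Psi}(1-s; \rho(w_{n,m}) \widetilde{W_\pi}, \widetilde{W_{\pi'}})$, the matrix argument always sits inside $P_n$, so the finer property (\ref{p2}) becomes available: on $P_n$, $W_\pi$ is supported on $U_n(F)(H_n^1 \cap P_n)$ with value $\psi(u)$. Unpacking $\widetilde{W_\pi}(g w_{n,m}) = W_\pi(w_n w_{n,m} \, {}^tg^{-1})$ and demanding the latter lie in the support locus constrains $h$ to $U_m(F) K_m$ as before and pins the variable $x \in M_{n-m-1, m}(F)$ to a single coset of a lattice, so that the $x$-integration becomes a finite sum over $M_{n-m-1, m}(\mathfrak{f})$. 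Another application of \Cref{lem:bessels-relation} then expresses $\widetilde{\Psi}$ as the same constant $c$ times the right-hand finite sum of \eqref{eqn:finite-gamma-GnXGm}.

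Dividing the two equalities and comparing with the defining equation \eqref{eqn:finite-gamma-GnXGm} of $\gamma(\sigma \times \sigma', \psi)$ yields the theorem. The main obstacle I anticipate is the bookkeeping in the second step: tracking the composite $w_n w_{n,m}\, {}^tg^{-1}$ through property (\ref{p2}) so as to verify that the $x$-sum that emerges is exactly the one appearing in \eqref{eqn:finite-gamma-GnXGm}, and that the normalizing constant agrees with the one produced in the first step. This depends sensitively on the nonstandard Haar-measure choices, so care will be needed to ensure the two $c$'s cancel rather than contributing a spurious scalar to the gamma-factor identity.
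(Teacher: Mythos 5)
The overall strategy (plug Paskunas--Stevens Whittaker functions into both Rankin--Selberg integrals, reduce the $p$-adic integration to finite sums over $\mathfrak{f}$ via \Cref{lem:bessels-relation}, and compare with \eqref{eqn:finite-gamma-GnXGm}) is exactly the paper's. But you have swapped which of the two support properties (\ref{p1})/(\ref{p2}) applies to which integral, and the swap is not harmless.

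For $\Psi$, the matrix $\begin{pmatrix} h & \\ & I_{n-m}\end{pmatrix}$ genuinely lies in the mirabolic $P_n$, so the sharper property (\ref{p2}) is available; the paper uses it and gets $\Psi(s; W_\pi, W_{\pi'}) = 1$ in one step. Your route via property (\ref{p1}) is also defensible (it produces $\Psi = c\sum_{h}J_{\sigma,\psi}(\mathrm{diag}(h,I))J_{\sigma',\psi^{-1}}(h)$, and the finite sum is $1$ by the support of $J_{\sigma,\psi}$), it just requires the extra computation that you would in any case have to do on the finite side.

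The real problem is on the $\widetilde{\Psi}$ side. You assert that ``the matrix argument always sits inside $P_n$, so the finer property (\ref{p2}) becomes available.'' That is false. After unwinding $\rho(w_{n,m})\widetilde{W_\pi}$, the argument of $W_\pi$ is
\begin{equation*}
\begin{pmatrix} & 1 & \\ & & I \\ w_m\,{}^t h^{-1} & & -w_m\,{}^t h^{-1}\,{}^t x\, w_{n-m-1}\end{pmatrix},
\end{equation*}
whose last row is a row of $w_m\,{}^t h^{-1}$ followed by entries of $-w_m\,{}^t h^{-1}\,{}^t x\, w_{n-m-1}$, which is not $(0,\ldots,0,1)$ for invertible $h$ in general. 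So this matrix is outside $P_n$ and property (\ref{p2}) does not apply. Moreover, if it \emph{did} apply, you would be forced into $U_n(F)(H_n^1\cap P_n)$, hence $h\in U_m(F)H^1_m$ and $x\equiv 0\pmod{\mathfrak{p}}$; the $h$-integration would collapse to a single coset of $U_m(\mathfrak{f})\backslash G_m(\mathfrak{f})$ and the $x$-integration to a single residue class, which is inconsistent with your own claim that ``the $x$-integration becomes a finite sum over $M_{n-m-1,m}(\mathfrak{f})$.'' The correct argument (as in the paper) applies property (\ref{p1}) here, writes the matrix as $\varpi^t u k$, and does the block analysis: comparing the $(m+1)$st column forces $t=0$, invertibility of $h$ forces $k_{21}\in K_m$, one then reduces $h$ to $U_m(F)\cap K_m\backslash K_m$ and $x$ to $M_{m,n-m-1}(\mathfrak{o})$, and the integral becomes $q^{-m(m-1)/2}\sum_{h\in G_m(\mathfrak{f})}\sum_{x\in M_{m,n-m-1}(\mathfrak{f})}J_{\sigma,\psi}(\cdots)J_{\sigma',\psi^{-1}}(h)$, which matches the right side of \eqref{eqn:finite-gamma-GnXGm} after absorbing $|U_m(\mathfrak{f})|=q^{m(m-1)/2}$. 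Your proposal needs to be corrected to use (\ref{p1}) here.
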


\begin{proof}
Let $W_{\pi}$ be the Paskunas-Stevens Whittaker function of $\pi$ with respect to $\psi$, and let $W_{\pi^\prime}$ be the Paskunas-Stevens Whittaker function of $\pi^\prime$ with respect to $\psi^{-1}$, as in \Cref{sec:explicit-whittaker}. Then
\begin{equation*}
\Psi(s; W_{\pi}, W_{\pi^\prime}) = \int_{U_m(F) \backslash G_m(F)} W_{\pi}\left(\begin{array}{cc}h & \\ & I_{n-m}\end{array}\right)W_{\pi^\prime}(h)|\det h|^{s - (n-m)/2} \, dh.
\end{equation*}
By property ($\ref{p2}$) of $W_{\pi}$, in order to support $W_{\pi}$, we must have
\begin{equation*}
\begin{pmatrix}
h & \\
& I_{n-m}
\end{pmatrix} \in U_n(F)(H^1_n \cap P_n).
\end{equation*}
Then $h$ can be written as $ux$, with $u \in U_m(F)$ and $x \in H_m^1$ and thus $\det h \in 1+\mathfrak{p}$. From here, we can deduce that $|\det h| = 1$, and property ($\ref{p2}$) says
$$W_{\pi}\left(\begin{array}{cc}h & \\ & I_{n-m}\end{array}\right) = \psi(u).$$
Since $x \in H_m^1 \subset K_m$, $\overline{x} = I$, so by \Cref{lem:bessels-relation},
\begin{equation*}
W_{\pi^\prime}(ux) = \psi^{-1}(u) \mathcal{J}_{\pi^\prime}(x) = \psi^{-1}(u)J_{\sigma^\prime, \psi^{-1}}(I) = \psi^{-1}(u).
\end{equation*}
Therefore, under our choices of Haar measures,
\begin{equation}
\Psi(s; W_{\pi}, W_{\pi^\prime}) = \int_{U_m(F) \cap H^1_m \backslash H^1_m} \, dh = 1.
\label{eqn:Psi-GnXGm}
\end{equation}

If we unfold $\widetilde{\Psi}(1-s; \rho(w_{n, m})\widetilde{W_{\pi}}, \widetilde{W_{\pi^\prime}})$ from its definition, then the following term from $\rho(w_{n, m})\widetilde{W_{\pi}}$ appears in the integral:
\begin{equation*}
W_{\pi}
\begin{pmatrix}
& 1 & \\
& & I \\
w_m\, ^th^{-1} & & -w_m\, ^th^{-1} \, ^tx\, w_{n-m-1}
\end{pmatrix}
\end{equation*}
Using changes of variables, we have for $\mathrm{Re}\, s \ll 0$,
\begin{equation*}
\begin{split}
& \widetilde{\Psi}(1-s; \rho(w_{n, m})\widetilde{W_{\pi}}, \widetilde{W_{\pi^\prime}}) \\
&= \iint W_{\pi}\begin{pmatrix}& 1 & \\ & & I \\ h & & x\end{pmatrix} W_{\pi^\prime}(h)|\det h|^{s-2+(n-m)/2} \, dxdh,
\end{split}
\end{equation*}
where $h$ is integrated over $U_m(F) \backslash G_m(F)$ and $x$ is integrated over $M_{m, n-m-1}(F)$. By property ($\ref{p1}$) of $W_\pi$, there exist $u = (u_{ij}) \in U_n(F)$, $t \in \N$ and $k = (k_{ij}) \in K_n$ so that
\begin{equation*}
\begin{pmatrix}& 1 & \\ & & I \\ h & & x\end{pmatrix} = \varpi^t uk.
\end{equation*}
Comparing the (m+1)-th column, we get
$$k_{n, m+1} = k_{n-1, m+1} = \cdots = k_{2, m+1} = 0, \text{ and } k_{1, m+1}\varpi^t = 1.$$
Since $k \in K_n$, $\det k \in \mathfrak{o}^\times$, so $k_{1, m+1}$ must then be in $\mathfrak{o}^\times$, thus $t = 0$ and $k_{1, m+1} = 1$. Taking determinants,
$$\left|\det \begin{pmatrix}& 1 & \\ & & I \\ h & & x\end{pmatrix}\right| = |\det h|, \text{ and } |\det uk| = 1.$$
Therefore, $|\det h| = 1$. In other words, $\widetilde{\Psi}(1-s; \rho(w_{n, m})\widetilde{W_{\pi}}, \widetilde{W_{\pi^\prime}})$ is independent of $s$. Now we will write $u$ and $k$ in terms of block matrices.
\begin{equation*}
u = \begin{pmatrix} u_1 & v \\ & u_2\end{pmatrix}, 
k = \begin{pmatrix} k_{11} & k_{12} \\ k_{21} & k_{22}\end{pmatrix},
\end{equation*}
where $u_1$ is of size $(n-m) \times (n-m)$ and $k_{11}$ is of size $(n-m) \times m$. Then
\begin{equation*}
\begin{pmatrix} & I \\ h & x^\prime\end{pmatrix} = uk, \text{ where } x^\prime = \begin{pmatrix}0 & x\end{pmatrix}
\end{equation*}
gives
\begin{equation*}
\begin{split}
u_1k_{11} + vk_{21} &= 0 \\
u_1k_{12} + vk_{22} &= I \\
u_2k_{21} &= h \\
u_2k_{22} &= x^\prime
\end{split}
\end{equation*}
Since $h$ is invertible, from the third equality, $k_{21}$ is invertible. From the first two equalities above, we can deduce that
\begin{equation*}
\begin{split}
v &= -u_1k_{11}k_{21}^{-1} \\
u_1 &= (k_{12} - k_{11}k_{21}^{-1}k_{22})^{-1}
\end{split}
\end{equation*}
The third equality tells us that $\det k_{21} = \det h \in \mathfrak{o}^\times$. Hence, $k_{21} \in K_m$, from which we deduces that both $u_1$ and $v$ have all coefficients in $\mathfrak{o}$. Therefore, we might assume that $u_1 = I$ and $v=0$, which then implies $k_{11} = 0$ and $k_{12} = I$. Since in the integral, $h$ is integrated over $U_m(F) \backslash G_m(F)$, by the third equality and that $k_{21} \in K_m$, $h$ can be integrated only over $U_m(F) \cap K_m \backslash K_m$. And if $h \in K_m$, then $u_2 \in K_m$, the forth equality then implies $x$ has all coefficients in $\mathfrak{o}$. Therefore,
\begin{equation}
\begin{split}
& \widetilde{\Psi}(1-s; \rho(w_{n, m})\widetilde{W_{\pi}}, \widetilde{W_{\pi^\prime}}) \\
&= \int_{U_m(F) \cap K_m \backslash K_m} \int_{M_{m,n-m-1}(\mathfrak{o})}W_{\pi}\begin{pmatrix}& 1 & \\ & & I \\ h & & x\end{pmatrix} W_{\pi^\prime}(h) \, dxdh \\
&= q^{-m(m-1)/2}\int_{K_m} \int_{M_{m, n-m-1}(\mathfrak{o})}W_{\pi}\begin{pmatrix}& 1 & \\ & & I \\ h & & x\end{pmatrix} W_{\pi^\prime}(h) \, dxdh \\
&= q^{-m(m-1)/2} \sum_{\substack{h \in G_m(\mathfrak{f}) \\ x \in M_{m,n-m-1}(\mathfrak{f})}} J_{\sigma, \psi}\begin{pmatrix}& 1 & \\ & & I \\ h & & x\end{pmatrix} J_{\sigma^\prime, \psi^{-1}}(h)
\end{split}
\label{eqn:tildePsi-GnXGm}
\end{equation}
The second equality in \Cref{eqn:tildePsi-GnXGm} comes from the fact that under our choices of Haar measures,
\begin{equation*}
\int_{U_m(F) \cap K_m} \, dh = q^{m(m-1)/2}.
\end{equation*}
The third equality is a consequence of Property (\ref{p1}) of Paskunas-Stevens Whittaker functions and \Cref{lem:bessels-relation}. \Cref{eqn:tildePsi-GnXGm}, together with \Cref{eqn:Psi-GnXGm} allows us to express $\gamma(s, \pi \times \pi^\prime, \psi)$ in terms of Bessel functions:
\begin{equation}
\begin{split}
\gamma(s, \pi \times \pi^\prime, \psi) &= q^{-m(m-1)/2} \sum_{\substack{h \in G_m(\mathfrak{f}) \\ x \in M_{m,n-m-1}(\mathfrak{f})}} J_{\sigma, \psi}\begin{pmatrix}& 1 & \\ & & I \\ h & & x\end{pmatrix} J_{\sigma^\prime, \psi^{-1}}(h) \\
&= \sum_{\substack{h \in U_m(\mathfrak{f}) \backslash G_m(\mathfrak{f}) \\ x \in M_{m,n-m-1}(\mathfrak{f})}} J_{\sigma, \psi}\begin{pmatrix}& 1 & \\ & & I \\ h & & x\end{pmatrix} J_{\sigma^\prime, \psi^{-1}}(h).
\end{split}
\label{eqn:local-gamma-in-bessel-GnXGm}
\end{equation}

Next, we need to write $\gamma(\sigma \times \sigma^\prime, \psi)$ in terms of Bessel functions. To this end, we put $J_{\sigma, \psi} \in \mathcal{W}(\sigma, \psi)$ and $J_{\sigma^\prime, \psi^{-1}} \in \mathcal{W}(\sigma^\prime, \psi)$ in \Cref{eqn:finite-gamma-GnXGm} to calculate $\gamma(\sigma \times \sigma^\prime, \psi)$. Since by \cite{Gelfand70}, the support for $J_{\sigma, \psi}$ are matrices of the form
$$u_1\begin{pmatrix}
 & & & \delta_1 I_{n_1} \\ 
 & & \delta_2 I_{n_2} & \\
 & \iddots & & \\
 \delta_r I_{n_r} & & &
\end{pmatrix}u_2,$$
where $u_1, u_2 \in U_n(\mathfrak{f}), \delta_1, \dots, \delta_r \in \mathfrak{f}$ and $n_1 + \cdots + n_r = n$. In particular, if $J_{\sigma, \psi}\begin{pmatrix}h & \\ & I\end{pmatrix} \ne 0$, then $u \in U_n(\mathfrak{f})$. Thus,
$$\sum_{h \in U_m(\mathfrak{f}) \backslash G_m(\mathfrak{f})}J_{\sigma, \psi}\begin{pmatrix}h & \\ & I\end{pmatrix} J_{\sigma^\prime, \psi^{-1}}(h) = 1.$$
Therefore, from \Cref{eqn:finite-gamma-GnXGm},
\begin{equation}
\gamma(\sigma \times \sigma^\prime, \psi) = \sum_{\substack{h \in U_m(\mathfrak{f}) \backslash G_m(\mathfrak{f}) \\ x \in M_{m,n-m-1}(\mathfrak{f})}} J_{\sigma, \psi}\begin{pmatrix}& 1 & \\ & & I \\ h & & x\end{pmatrix} J_{\sigma^\prime, \psi^{-1}}(h).
\label{eqn:finite-gamma-in-bessel-GnXGm}
\end{equation}
One can find a more detailed proof of a simplified version of \Cref{eqn:finite-gamma-in-bessel-GnXGm} in \cite[Proposition 2.16]{Nien14}.

Finally, comparing \Cref{eqn:local-gamma-in-bessel-GnXGm,eqn:finite-gamma-in-bessel-GnXGm}, we conclude that
\begin{equation*}
\gamma(s, \pi \times \pi^\prime, \psi) = \gamma(\sigma \times \sigma^\prime, \psi).
\end{equation*}
\end{proof}

\section{$G_n \times G_n$ case}\label{sec:GnXGn}

We are now consider the case where $n = m$. In this case, we get a very similar theorem.
\begin{theorem}\label{thm:GnXGn}
Let $\pi$ and $\pi^\prime$ be level zero representations of $GL_n(F)$ coming from $\sigma$ and $\sigma^\prime$. Assume that $\sigma^\prime$ is not isomorphic to $\tilde{\sigma}$, the contragradient of $\sigma$. Then
\begin{equation*}
\gamma(s, \pi \times \pi^\prime, \psi) = \gamma(\sigma \times \sigma^\prime, \psi).
\end{equation*}
\end{theorem}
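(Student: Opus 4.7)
The plan follows the strategy of Theorem~\ref{thm:GnXGm}: express both sides of the local functional equation~\eqref{eqn:local-gamma-GnXGn} as finite-field Bessel sums via Paskunas-Stevens Whittaker functions, then compare with~\eqref{eqn:finite-gamma-GnXGn}. Take $W_\pi \in \mathcal{W}(\pi, \psi)$ and $W_{\pi^\prime} \in \mathcal{W}(\pi^\prime, \psi^{-1})$ from Section~\ref{sec:explicit-whittaker}, and choose $\phi \in \mathcal{S}(F^n)$ to be the pullback of some $\phi_0 \in C_0(\mathfrak{f}^n)$ along the reduction $\mathfrak{o}^n \twoheadrightarrow \mathfrak{f}^n$, extended by zero off $\mathfrak{o}^n$. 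A direct computation using $\int_{\mathfrak{p}} dx = 1$ shows that $\widehat{\phi}$ is the analogous pullback of $\widehat{\phi_0}$.

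For $\Psi(s; W_\pi, W_{\pi^\prime}, \phi)$, I would parametrize $U_n(F) \backslash (U_n(F) F^\times K_n)$ by $(z, k) \in (F^\times / \mathfrak{o}^\times) \times (U_n(\mathfrak{o}) \backslash K_n)$. Property~$(\ref{p1})$ and Lemma~\ref{lem:bessels-relation} give $W_\pi(uzk) W_{\pi^\prime}(uzk) = \omega_\pi(z)\omega_{\pi^\prime}(z) J_{\sigma, \psi}(\overline{k}) J_{\sigma^\prime, \psi^{-1}}(\overline{k})$, the two Whittaker characters cancelling. Since $e_n g = z\cdot e_n k$ and $e_n k \in \mathfrak{o}^n\setminus\mathfrak{p}^n$, the condition $\phi_0(0) = 0$ forces $z \in \mathfrak{o}^\times$, so only the trivial coset contributes. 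After identifying $U_n(\mathfrak{o}) \backslash K_n / H_n^1$ with $U_n(\mathfrak{f}) \backslash G_n(\mathfrak{f})$, this gives
$$
\Psi(s; W_\pi, W_{\pi^\prime}, \phi) = C\sum_{g \in U_n(\mathfrak{f})\backslash G_n(\mathfrak{f})} J_{\sigma, \psi}(g) J_{\sigma^\prime, \psi^{-1}}(g)\,\phi_0(e_n g),
$$
independent of $s$, for some explicit Haar-measure constant $C$.

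For $\Psi(1-s; \widetilde{W_\pi}, \widetilde{W_{\pi^\prime}}, \widehat{\phi})$, note that $\widetilde{W_\pi}(g) = W_\pi(w_n\,{}^tg^{-1})$ is still supported on $U_n(F)F^\times K_n$, and for $k \in K_n$ the same Paskunas-Stevens formula gives $\widetilde{W_\pi}(k) = J_{\sigma, \psi}(w_n\,{}^t\overline{k}^{-1})$. The same parametrization, with $\widehat{\phi}$ in place of $\phi$, now sums over $j = v(z) \geq 0$. Each $j \geq 1$ contribution equals $\widehat{\phi_0}(0)$ times a geometric factor in $q^{-n(1-s)}$ times the integral $\int_{U_n(\mathfrak{o})\backslash K_n} J_{\sigma,\psi}(w_n\,{}^t\overline{k}^{-1}) J_{\sigma^\prime, \psi^{-1}}(w_n\,{}^t\overline{k}^{-1})\, dk$. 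After the involution $g \mapsto w_n\,{}^tg^{-1}$ on $G_n(\mathfrak{f})$ (which preserves $U_n(\mathfrak{f})$-cosets), this integral becomes $\sum_{g \in U_n(\mathfrak{f})\backslash G_n(\mathfrak{f})} J_{\sigma, \psi}(g) J_{\sigma^\prime, \psi^{-1}}(g)$, a $G_n(\mathfrak{f})$-invariant bilinear form on $\sigma\otimes\sigma^\prime$; by Schur's lemma and the assumption $\sigma^\prime \not\cong \tilde\sigma$, it vanishes. Only $j = 0$ survives, and after the same involution
$$
\Psi(1-s; \widetilde{W_\pi}, \widetilde{W_{\pi^\prime}}, \widehat{\phi}) = C\sum_{g \in U_n(\mathfrak{f})\backslash G_n(\mathfrak{f})} J_{\sigma, \psi}(g) J_{\sigma^\prime, \psi^{-1}}(g)\,\widehat{\phi_0}(e_1\,{}^tg^{-1}),
$$
with the same constant $C$.

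Dividing the two identities and invoking~\eqref{eqn:finite-gamma-GnXGn} with $W = J_{\sigma, \psi}$ and $W^\prime = J_{\sigma^\prime, \psi^{-1}}$ yields $\gamma(s, \pi\times\pi^\prime, \psi) = \gamma(\sigma\times\sigma^\prime, \psi)$. The main obstacle is the vanishing $\sum_g J_{\sigma, \psi}(g) J_{\sigma^\prime, \psi^{-1}}(g) = 0$ under $\sigma^\prime \not\cong\tilde\sigma$; this is precisely where the hypothesis is used, since otherwise the $j \geq 1$ tail would produce a geometric series reproducing the $L$-factor from Corollary~\ref{cor:l-function}, and the reduction to a finite-field constant would break down.
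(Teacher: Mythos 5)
Your proposal follows essentially the same strategy as the paper: choose Paskunas--Stevens Whittaker data plus a level-zero test function $\phi$, parametrize the support of $W_\pi$ by $F^\times/\mathfrak{o}^\times \times U_n(\mathfrak{o})\backslash K_n$, observe that the $\Psi(s;\dots)$ side has only the $v(z)=0$ contribution while the dual side has a geometric tail in $v(z)$ multiplying the bilinear Bessel sum, and then kill the tail using the hypothesis $\sigma'\not\cong\tilde\sigma$. The identification with $\gamma(\sigma\times\sigma',\psi)$ via \Cref{eqn:finite-gamma-GnXGn} is the same endgame. The two real differences are: (a) the paper fixes $\phi$ to be the indicator of $e_nH_n^1$ (so $\phi_0=\mathbf{1}_{e_n}$) and invokes Proposition~7.2 of \cite{PaskunasStevens08} to get $\Psi(s;W_\pi,W_{\pi'},\phi)=1$ outright, whereas you allow a general $\phi_0\in C_0(\mathfrak{f}^n)$ and must still normalize the Haar-measure constant and verify the denominator is nonzero (which only the paper's specific choice makes automatic); and (b) you prove the vanishing $\sum_g J_{\sigma,\psi}(g)J_{\sigma',\psi^{-1}}(g)=0$ by recognizing the sum as a $G_n(\mathfrak{f})$-invariant pairing of $\sigma$ with $\sigma'$ and applying Schur's lemma, whereas the paper's \Cref{lem:representations-and-bessels} does a direct character computation via \Cref{eqn:finite-bessel}. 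Your Schur argument is shorter and more conceptual for the statement at hand, but note the paper's lemma is doing more work: it also evaluates the sum in the case $\sigma'\cong\tilde\sigma$, which is needed later for the corollary $\gamma(\sigma_1\times\sigma_2,\psi)=-1$. One small imprecision worth flagging: the map $g\mapsto w_n\,{}^tg^{-1}$ does not send $U_n(\mathfrak{f})$-left-cosets to $U_n(\mathfrak{f})$-left-cosets (it intertwines left cosets with right cosets via $u\mapsto w_n\,{}^tu^{-1}w_n^{-1}$); the change of variable is still legitimate because the integrand $J_{\sigma,\psi}\cdot J_{\sigma',\psi^{-1}}$ is bi-$U_n(\mathfrak{f})$-invariant, so the sum over $U_n\backslash G_n$ may be rewritten as $|U_n(\mathfrak{f})|^{-1}\sum_{g\in G_n(\mathfrak{f})}$ and the substitution performed there --- but you should say this rather than assert coset-preservation.
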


\begin{proof}
As before, let $W_\pi$ and $W_{\pi^\prime}$ be Paskunas-Stevens' Whittaker functions of $\pi$ and $\pi^\prime$ respectively. Let $\phi$ to be an indicator function on $e_n H_n^1$. Proposition 7.2 of \cite{PaskunasStevens08} shows that
\begin{equation}
\Psi(s; W_\pi, W_{\pi^\prime}, \phi) = \int_{U_n(F)\backslash U_n(F)H_n^1} \, dg = 1.
\label{eqn:glnxgln-Psi}
\end{equation}
So from \Cref{eqn:local-gamma-GnXGn}, for $\mathrm{Re}\, s \ll 0$,
$$\gamma(s, \pi \times \pi^\prime, \psi) = \Psi(1-s; \widetilde{W_\pi}, \widetilde{W_{\pi^\prime}}, \widehat{\phi}).$$
Unfolding from the definition of $\Psi$ and using a change of variable $g \mapsto w_n \, ^tg^{-1}$, we get
\begin{equation*}
\gamma(s, \pi \times \pi^\prime, \psi) = \int_{U_n(F) \backslash G_n(F)} W_\pi(g) W_{\pi^\prime}(g) \widehat{\phi}(e_1 \, ^tg^{-1}) |\det g|^{s-1} \, dg.
\end{equation*}
Since $\mathrm{Supp} W \subseteq U_n(F)F^\times K_n = \coprod_{l \in \Z} \varpi^l U_n(F)K_n$, where $\varpi$ is a uniformizer of $F$, we can further express $\gamma$ as
\begin{equation*}
\begin{split}
\gamma(s, \pi \times \pi^\prime, \psi) & = \sum_{l \in \Z} q^{-nl(s-1)} \omega(\varpi)^l\\
& \cdot \int_{U_n(\mathfrak{o}) \backslash K_n} \mathcal{J}_{\pi}(g)\mathcal{J}_{\pi^\prime}(g) \widehat{\phi}(\varpi^{-l}e_1 \, ^tg^{-1}) \, dg,
\end{split}
\end{equation*}
for $\mathrm{Re}\, s \ll 0$, where $\omega$ is the product of the central characters of $\pi$ and $\pi^\prime$. Since $\phi$ is an indicator function on $e_nH_n^1$, we can calculate its Fourier transform.
\begin{equation*}
\widehat{\phi}(\varpi^{-l}e_1 \, ^tg^{-1}) = \left\{
\begin{array}{ll}
1, & l < 0; \\
\psi(e_1\, ^tg^{-1} \, ^te_n), & l = 0; \\
0, & l > 0.
\end{array}
\right.
\end{equation*}
Therefore, together with \Cref{lem:bessels-relation}, for $\mathrm{Re}\, s \ll 0$,
\begin{equation}
\begin{split}
\gamma(s, \pi \times \pi^\prime, \psi) &=  \left(\sum_{l < 0} q^{-nl(s-1)} \omega(\varpi)^l\right) \int_{U_n(\mathfrak{o}) \backslash K_n} \mathcal{J}_{\pi}(g)\mathcal{J}_{\pi^\prime}(g)  \, dg\\
& + \int_{U_n(\mathfrak{o}) \backslash K_n} \mathcal{J}_{\pi}(g)\mathcal{J}_{\pi^\prime}(g) \psi(e_1 \, ^tg^{-1}\, ^te_n)  \, dg \\
&=  \left(\sum_{l < 0} q^{-nl(s-1)} \omega(\varpi)^l\right) \sum_{U_n(\mathfrak{f}) \backslash G_n(\mathfrak{f})} J_{\sigma, \psi}(g)J_{\sigma^\prime, \psi^{-1}}(g) \\
& + \sum_{U_n(\mathfrak{f}) \backslash G_n(\mathfrak{f})} J_{\sigma, \psi}(g)J_{\sigma^\prime, \psi^{-1}}(g) \psi(e_1 \, ^tg^{-1}\, ^te_n) .
\end{split}
\label{eqn:cal_of_gamma_GnXGn}
\end{equation}
If we know
\begin{equation}
\sum_{U_n(\mathfrak{f}) \backslash G_n(\mathfrak{f})} J_{\sigma, \psi}(g)J_{\sigma^\prime, \psi^{-1}}(g) = 0,
\label{eqn:sum-of-two-bessels}
\end{equation}
then
\begin{equation}
\gamma(s, \pi \times \pi^\prime, \psi) =\sum_{U_n(\mathfrak{f}) \backslash G_n(\mathfrak{f})} J_{\sigma, \psi}(g)J_{\sigma^\prime, \psi^{-1}}(g) \psi(e_1 \, ^tg^{-1}\, ^te_n) .
\label{eqn:local-gamma-in-bessel-GnXGn}
\end{equation}

On the other hand, if we take $W = J_{\sigma, \psi}$, $W^\prime = J_{\sigma^\prime, \psi^{-1}}$ and $\phi$ to be an indicator function on $e_n$, then from \Cref{eqn:finite-gamma-GnXGn}, we have
\begin{equation}
\gamma(\sigma \times \sigma^\prime, \psi) = \sum_{U_n(\mathfrak{f}) \backslash G_n(\mathfrak{f})} J_{\sigma, \psi}(g)J_{\sigma^\prime, \psi^{-1}}(g) \psi(e_1 \, ^tg^{-1}\, ^te_n) .
\label{eqn:finite-gamma-in-bessel-GnXGn}
\end{equation}
By \Cref{eqn:local-gamma-in-bessel-GnXGn,eqn:finite-gamma-in-bessel-GnXGn}, we conclude that
\begin{equation*}
\gamma(s, \pi \times \pi^\prime, \psi) =\gamma(\sigma \times \sigma^\prime, \psi).
\end{equation*}
Thus, to finish the proof, it remains to prove \Cref{eqn:sum-of-two-bessels}. This follows from the following lemma.
\end{proof}

\begin{lemma}\label{lem:representations-and-bessels}
Let $\sigma$ and $\sigma^\prime$ be two irreducible generic representations of $G_n(\mathfrak{f})$ with normalized Bessel functions $J_{\sigma, \psi}$ and $J_{\sigma^\prime, \psi^{-1}}$. Then
\begin{equation*}
\sum_{g \in U_n(\mathfrak{f}) \backslash G_n(\mathfrak{f})} J_{\sigma, \psi}(g)J_{\sigma^\prime, \psi^{-1}}(g) =
\left\{\begin{array}{ll}
\frac{|G_n(\mathfrak{f})|}{|U_n(\mathfrak{f})|\dim \sigma}, & \text{ if } \sigma^\prime \cong \tilde{\sigma} \\
0, & \text{ otherwise.}
\end{array}\right.
\end{equation*}
Here, $\tilde{\sigma}$ is the contragradient of $\sigma$.
\end{lemma}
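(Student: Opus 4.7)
The plan is to recognize that
\[
B(W, W^\prime) := \sum_{g \in U_n(\mathfrak{f}) \backslash G_n(\mathfrak{f})} W(g) W^\prime(g)
\]
defines a $G_n(\mathfrak{f})$-invariant bilinear form on $\mathcal{W}(\sigma, \psi) \otimes \mathcal{W}(\sigma^\prime, \psi^{-1})$ under the right regular action. Indeed, $B(R(h)W, R(h)W^\prime) = B(W, W^\prime)$ by the substitution $g \mapsto gh^{-1}$, and the sum is well defined because the integrand is $U_n(\mathfrak{f})$-left invariant. Since $\mathcal{W}(\sigma, \psi) \cong \sigma$ and $\mathcal{W}(\sigma^\prime, \psi^{-1}) \cong \sigma^\prime$ as $G_n(\mathfrak{f})$-modules (by uniqueness of the Whittaker model) and both are irreducible, Schur's lemma forces $B \equiv 0$ unless $\sigma^\prime \cong \tilde{\sigma}$. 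This immediately handles the second case of the lemma.

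For $\sigma^\prime \cong \tilde{\sigma}$, I would compute $B(J_{\sigma, \psi}, J_{\tilde{\sigma}, \psi^{-1}})$ directly. Since the integrand is $U_n(\mathfrak{f})$-left invariant, the sum over $U_n(\mathfrak{f}) \backslash G_n(\mathfrak{f})$ equals $|U_n(\mathfrak{f})|^{-1}$ times the sum over $G_n(\mathfrak{f})$. Expanding both Bessel functions via \Cref{eqn:finite-bessel} produces an inner sum $\sum_{g \in G_n(\mathfrak{f})} \mathrm{tr}_\sigma(gh_1) \mathrm{tr}_{\tilde{\sigma}}(gh_2)$; using $\mathrm{tr}_{\tilde{\sigma}}(x) = \mathrm{tr}_\sigma(x^{-1})$ and cyclicity of trace, Schur orthogonality in the form $\sum_{y \in G_n(\mathfrak{f})} \mathrm{tr}_\sigma(y) \mathrm{tr}_\sigma(y^{-1} z) = \frac{|G_n(\mathfrak{f})|}{\dim \sigma} \mathrm{tr}_\sigma(z)$ evaluates this inner sum to $\frac{|G_n(\mathfrak{f})|}{\dim \sigma} \mathrm{tr}_\sigma(h_2^{-1} h_1)$. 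The change of variable $h = h_2^{-1} h_1$, combined with the fact that $\psi$ is an honest character of $U_n(\mathfrak{f})$ (so $\psi(h_1^{-1})\psi(h_2) = \psi(h^{-1})$), causes $h_2$ to decouple and contribute a factor of $|U_n(\mathfrak{f})|$. The residual sum $\sum_h \psi(h^{-1}) \mathrm{tr}_\sigma(h)$ equals $|U_n(\mathfrak{f})| J_{\sigma, \psi}(I) = |U_n(\mathfrak{f})|$ by \Cref{eqn:finite-bessel} at $g = I$, and assembling all the constants gives $B(J_{\sigma, \psi}, J_{\tilde{\sigma}, \psi^{-1}}) = \frac{|G_n(\mathfrak{f})|}{|U_n(\mathfrak{f})| \dim \sigma}$, as claimed.

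The main obstacle is the bookkeeping of the $\psi$-twists through the change of variable $h = h_2^{-1} h_1$, which relies crucially on $\psi$ being a genuine character of $U_n(\mathfrak{f})$ so that the $h_2$-dependence telescopes cleanly. Beyond this, the argument is a routine combination of Schur's lemma, Schur orthogonality of irreducible characters, and the trace-formula expression \Cref{eqn:finite-bessel} for the Bessel function.
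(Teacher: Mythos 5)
Your proof is correct. The computation in the case $\sigma^\prime \cong \tilde{\sigma}$ is essentially the same as the paper's: expand both Bessel functions via \Cref{eqn:finite-bessel}, evaluate the inner sum over $G_n(\mathfrak{f})$ using Schur orthogonality in the form $\sum_y \mathrm{tr}_\sigma(y)\mathrm{tr}_\sigma(y^{-1}z) = \frac{|G_n(\mathfrak{f})|}{\dim\sigma}\mathrm{tr}_\sigma(z)$, and then collapse the two $U_n(\mathfrak{f})$-sums via a change of variable using that $\psi$ is a character. Where you diverge is in the vanishing case: the paper obtains it as a byproduct of the same expansion, since the inner $g$-sum $\sum_g \overline{\chi}(x^{-1}yg^{-1})\chi^\prime(g)$ already vanishes whenever $\overline{\chi}\neq\chi^\prime$ (this is the generalized orthogonality relation cited as Theorem 2.13 of Isaacs), so both cases fall out of one computation. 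You instead dispose of the vanishing case a priori by observing that $B(W,W^\prime)=\sum_{U\backslash G}W(g)W^\prime(g)$ is a $G_n(\mathfrak{f})$-invariant pairing on $\mathcal{W}(\sigma,\psi)\otimes\mathcal{W}(\sigma^\prime,\psi^{-1})\cong\sigma\otimes\sigma^\prime$, so Schur's lemma forces $B\equiv 0$ unless $\sigma^\prime\cong\tilde{\sigma}$. Your route is slightly cleaner conceptually (the dichotomy is visible before any trace expansion), while the paper's is more economical in that a single character computation yields both the criterion and the value; the two arguments ultimately rest on the same orthogonality input.
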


\begin{proof}
For ease of notations, we set $\chi = \mathrm{tr}_\sigma$ and $\chi^\prime = \mathrm{tr}_{\sigma^\prime}$. Then $\overline{\chi}$ is the character of $\tilde{\sigma}$. Using \Cref{eqn:finite-bessel},
\begin{equation*}
\begin{split}
S &= \sum_{g \in U_n(\mathfrak{f}) \backslash G_n(\mathfrak{f})} J_{\sigma, \psi}(g)J_{\sigma^\prime, \psi^{-1}}(g) \\
&= \frac{1}{|U_n(\mathfrak{f})|^3} \sum_{g \in G_n(\mathfrak{f})} \left(\sum_{x \in U_n(\mathfrak{f})}\psi(x^{-1})\chi(gx)\right) \left(\sum_{y \in U_n(\mathfrak{f})}\psi^{-1}(y^{-1})\chi^\prime(gy)\right) \\
&= \frac{1}{|U_n(\mathfrak{f})|^3} \sum_x \sum_y \psi(x^{-1}y)\sum_g \chi(gx)\chi^\prime(gy)
\end{split}
\end{equation*}
With a change a variable $g$ to $gy^{-1}$,
\begin{equation*}
\begin{split}
S &= \frac{1}{|U_n(\mathfrak{f})|^3} \sum_x \sum_y \psi(x^{-1}y)\sum_g \chi(gy^{-1}x)\chi^\prime(g) \\
&= \frac{1}{|U_n(\mathfrak{f})|^3} \sum_x \sum_y \psi(x^{-1}y)\sum_g \overline{\chi}(x^{-1}yg^{-1})\chi^\prime(g)
\end{split}
\end{equation*}
By Theorem 2.13 of \cite{Isaacs06}, $\sum_g \overline{\chi}(x^{-1}yg^{-1})\chi^\prime(g)$ is nonzero if and only if $\overline{\chi} = \chi^\prime$. In this case, $\tilde{\sigma} \cong \sigma^\prime$ and
\begin{equation*}
\begin{split}
S &=  \frac{1}{|U_n(\mathfrak{f})|^3} \sum_x \sum_y \psi(x^{-1}y) |G_n(\mathfrak{f})|\frac{\chi^\prime(x^{-1}y)}{\dim \sigma} \\
&= \frac{|G_n(\mathfrak{f})|}{|U_n(\mathfrak{f})|^3 \dim \sigma} \sum_x \sum_y \psi(y)\chi^\prime(y) \\
&= \frac{|G_n(\mathfrak{f})|}{|U_n(\mathfrak{f})| \dim \sigma} J_{\sigma^\prime, \psi^{-1}}(1) \\
&= \frac{|G_n(\mathfrak{f})|}{|U_n(\mathfrak{f})| \dim \sigma}.
\end{split}
\end{equation*}
The second equality above comes from a change of variable from $y$ to $xy$.
\end{proof}

Putting \Cref{thm:GnXGm,thm:GnXGn} together, we have our main theorem, \Cref{thm:main}. Let level zero representation $\pi_i$ come from $(\lambda_i, \sigma_i)$, for $i = 1, 2$. If $\sigma_1 \not\cong \tilde{\sigma}_2$, then by \Cref{lem:level-zero-repn}, $L(s, \pi_1 \times \pi_2) = 1$. In this case,
$$\epsilon(s, \pi_1 \times \pi_2, \psi) = \gamma(s, \pi_1 \times \pi_2, \psi).$$
The equality in \Cref{thm:main} implies that $\epsilon(s, \pi_1 \times \pi_2, \psi)$ is a constant, i.e., the conductor of the pair $(\pi_1, \pi_2)$ is zero with respect to $\psi$. This is also a consequence of \cite[Theorem 6.5(i)]{BushnellHenniartKutzko98}. If $\sigma_1 \cong \tilde{\sigma}_2$, equality in our main theorem fails. However, from the proof of \Cref{thm:GnXGn} and \cite[Theorem 6.5(ii)]{BushnellHenniartKutzko98}, we have

\begin{corollary}
If $\sigma_1$ and $\sigma_2$ are irreducible cuspidal representations of $G_n(\mathfrak{f})$ such that $\sigma_1 \cong \tilde{\sigma}_2$, then
$$\gamma(\sigma_1 \times \sigma_2, \psi) = -1.$$
\end{corollary}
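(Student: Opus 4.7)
My plan is to locate the zero of $\gamma(s,\pi_1\times\pi_2,\psi)$ that arises from the pole of $L(s,\pi_1\times\pi_2)$ when $\sigma_1\cong\tilde{\sigma}_2$, and then read off $\gamma(\sigma_1\times\sigma_2,\psi)$ by evaluating at that point the explicit formula derived in the proof of \Cref{thm:GnXGn}.

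First, I would specialize \Cref{eqn:cal_of_gamma_GnXGn} to the case $\sigma_1\cong\tilde{\sigma}_2$ at hand; now both Bessel sums in that equation survive. By \Cref{lem:representations-and-bessels} the first sum equals $|G_n(\mathfrak{f})|/(|U_n(\mathfrak{f})|\dim\sigma_1)$, and a short computation with $|G_n(\mathfrak{f})|=q^{n(n-1)/2}\prod_{j=1}^n(q^j-1)$, $|U_n(\mathfrak{f})|=q^{n(n-1)/2}$, and $\dim\sigma_1=\prod_{j=1}^{n-1}(q^j-1)$ simplifies it to $q^n-1$. The second sum is precisely $\gamma_f:=\gamma(\sigma_1\times\sigma_2,\psi)$ by \Cref{eqn:finite-gamma-in-bessel-GnXGn}. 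Setting $\lambda=\lambda_1\lambda_2$ and summing the geometric series in $l$ yields
$$\gamma(s,\pi_1\times\pi_2,\psi)=(q^n-1)\cdot\frac{X}{1-X}+\gamma_f,\qquad X=\frac{q^{n(s-1)}}{\lambda}.$$

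Second, by \Cref{cor:l-function}, $L(s,\pi_1\times\pi_2)=(1-\lambda q^{-ns})^{-1}$ has a simple pole at each $s_0$ with $q^{ns_0}=\lambda$, and at any such $s_0$ one computes $X=q^{-n}$. The twist $L(1-s,\tilde{\pi}_1\times\tilde{\pi}_2)$, by \Cref{lem:level-zero-repn}, has its poles at $s=1+s_0$, so it is finite and nonzero at $s_0$. By \cite[Theorem 6.5(ii)]{BushnellHenniartKutzko98}, or equivalently by the general Rankin-Selberg fact that $\epsilon(s,\pi_1\times\pi_2,\psi)$ is a nonzero constant times a power of $q^{-s}$, the epsilon factor is also finite and nonzero at $s_0$. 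The functional equation $\gamma(s,\pi_1\times\pi_2,\psi)=\epsilon(s,\pi_1\times\pi_2,\psi)\cdot L(1-s,\tilde{\pi}_1\times\tilde{\pi}_2)/L(s,\pi_1\times\pi_2)$ then forces $\gamma(s_0,\pi_1\times\pi_2,\psi)=0$. Substituting $X=q^{-n}$ into the formula of the first step gives
$$0=(q^n-1)\cdot\frac{q^{-n}}{1-q^{-n}}+\gamma_f=1+\gamma_f,$$
so $\gamma(\sigma_1\times\sigma_2,\psi)=-1$.

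The only step requiring external input is the verification that $\epsilon(s,\pi_1\times\pi_2,\psi)$ has neither a pole nor a zero at $s_0$; this is precisely where \cite{BushnellHenniartKutzko98} enters. Everything else is direct bookkeeping with \Cref{eqn:cal_of_gamma_GnXGn} and the L-factor formulas already established.
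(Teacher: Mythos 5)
Your proof is correct, and while it starts from the same explicit formula for $\gamma(s,\pi_1\times\pi_2,\psi)$ derived from \Cref{eqn:cal_of_gamma_GnXGn}, \Cref{lem:representations-and-bessels}, and \Cref{eqn:finite-gamma-in-bessel-GnXGn}, it then takes a genuinely different and cleaner route to finish. The paper computes $\epsilon(s,\pi_1\times\pi_2,\psi)$ as an explicit rational function of $q^{ns}$, invokes \cite[Theorem 6.5(ii)]{BushnellHenniartKutzko98} to pin down $\epsilon(s) = cq^{ns}$, and then clears denominators and matches coefficients in $q^{ns}$ to solve a $2\times2$ linear system for $c$ and $\gamma(\sigma_1\times\sigma_2,\psi)$. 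You instead evaluate $\gamma(s,\pi_1\times\pi_2,\psi)$ at a pole $s_0$ of $L(s,\pi_1\times\pi_2)$: there $1/L(s_0)=0$ while $L(1-s_0,\tilde\pi_1\times\tilde\pi_2)$ and $\epsilon(s_0)$ are finite and nonzero, so $\gamma(s_0)=0$, and substituting $X=q^{-n}$ into the formula gives $0=1+\gamma(\sigma_1\times\sigma_2,\psi)$ directly. This is shorter, avoids clearing denominators and matching coefficients, and---as you note---requires only the general Rankin-Selberg fact that the $\epsilon$-factor is a nonzero monomial in $q^{-s}$, not the precise conductor value $n$ from \cite{BushnellHenniartKutzko98}. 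The trade-off is that the paper's coefficient-matching argument simultaneously extracts the constant $c=(\lambda_1\lambda_2)^{-1}$, which yields the explicit formulas for $\gamma(s,\pi_1\times\pi_2,\psi)$ and $\epsilon(s,\pi_1\times\pi_2,\psi)$ recorded in the remark following the corollary; your argument determines $\gamma(\sigma_1\times\sigma_2,\psi)$ alone, so those closing formulas would still need the paper's computation or an extra step.
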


\begin{proof}
Let $\pi_i$ be a level zero representation of $G_n(F)$ from $(\lambda_i \in \C^\times, \sigma_i)$, for $i = 1, 2$. Let $\omega$ be the product of central characters of $\pi_1$ and $\pi_2$. Then $\omega(\varpi) = \lambda_1\lambda_2$, where $\varpi$ is a uniformizer of $F$. From \Cref{eqn:cal_of_gamma_GnXGn,eqn:finite-gamma-in-bessel-GnXGn}, we get
\begin{equation*}
\gamma(s, \pi_1 \times \pi_2, \psi) =  \frac{q^{n(s-1)}}{\lambda_1\lambda_2 - q^{n(s-1)}} \sum_{U_n(\mathfrak{f}) \backslash G_n(\mathfrak{f})} J_{\sigma_1, \psi}(g)J_{\sigma_2, \psi^{-1}}(g) + \gamma(\sigma_1 \times \sigma_2, \psi).
\end{equation*}
The dimension of $\sigma_1$ or $\sigma_2$ is $(q-1)(q^2-1)\cdots(q^{n-1}-1)$, see Corollary 1 of Theorem 2.3 in \cite{Gelfand70}. Thus, by \Cref{lem:representations-and-bessels},
\begin{equation}
\gamma(s, \pi_1 \times \pi_2, \psi) =  \frac{q^{n(s-1)}(q^n-1)}{\lambda_1\lambda_2 - q^{n(s-1)}} + \gamma(\sigma_1 \times \sigma_2, \psi).
\label{eqn:level-zero-gamma}
\end{equation}
Now from \Cref{cor:l-function},
$$L(s, \pi_1 \times \pi_2) = (1-\lambda_1\lambda_2 q^{-ns})^{-1},$$
and
$$L(s, \tilde{\pi}_1 \times \tilde{\pi}_2) = (1-\lambda_1^{-1}\lambda_2^{-1}q^{-ns})^{-1}.$$
Therefore,
\begin{equation}
\begin{split}
\epsilon(s, \pi_1 \times \pi_2, \psi) &= \frac{\gamma(s, \pi_1 \times \pi_2, \psi)L(s, \pi_1 \times \pi_2)}{L(1-s, \tilde{\pi}_1 \times \tilde{\pi}_2)} \\
&= \frac{(\lambda_1\lambda_2)^{-1}q^{-n}[q^n-1-\gamma(\sigma_1\times\sigma_2, \psi)]q^{ns}+\gamma(\sigma_1\times\sigma_2, \psi)}{q^{ns}-\lambda_1\lambda_2} \cdot q^{ns}.
\end{split}
\label{eqn:level-zero-epsilon}
\end{equation}
From \cite[Theorem 6.5(ii)]{BushnellHenniartKutzko98}, we deduce that the conductor of the pair $(\pi_1, \pi_2)$ with respect to $\psi$ is $n$, i.e., $\epsilon(s, \pi_1 \times \pi_2, \psi) = cq^{ns}$ for some constant $c \in \C^\times$. From \Cref{eqn:level-zero-epsilon}, we get
$$c=\frac{(\lambda_1\lambda_2)^{-1}q^{-n}[q^n-1-\gamma(\sigma_1\times\sigma_2, \psi)]q^{ns}+\gamma(\sigma_1\times\sigma_2, \psi)}{q^{ns}-\lambda_1\lambda_2}.$$
Clearing the denominator,
$$[c - (\lambda_1\lambda_2)^{-1}q^{-n}(q^n-1-\gamma(\sigma_1\times\sigma_2, \psi))]q^{ns} - (c\lambda_1\lambda_2+\gamma(\sigma_1\times\sigma_2, \psi))=0.$$
Since $c$ is a constant independent of $s$, we have
\begin{equation*}
\left\{\begin{array}{l}
c - (\lambda_1\lambda_2)^{-1}q^{-n}(q^n-1-\gamma(\sigma_1\times\sigma_2, \psi)) = 0 \\
c\lambda_1\lambda_2+\gamma(\sigma_1\times\sigma_2, \psi) = 0
\end{array}\right.
\end{equation*}
Treating $c$ and $\gamma(\sigma_1 \times \sigma_2, \psi)$ as unknowns, from the linear system of equations above, we get $c = (\lambda_1\lambda_2)^{-1}$ and $\gamma(\sigma_1 \times \sigma_2, \psi) = -1$.
\end{proof}

We remark that, putting $\gamma(\sigma_1 \times \sigma_2, \psi) = -1$ in \Cref{eqn:level-zero-gamma,eqn:level-zero-epsilon},
\begin{equation*}
\gamma(s, \pi_1 \times \pi_2, \psi) = -\frac{q^{ns}-\lambda_1\lambda_2}{q^{n(s-1)}-\lambda_1\lambda_2}
\text{ and }
\epsilon(s, \pi_1 \times \pi_2, \psi) = (\lambda_1\lambda_2)^{-1}q^{ns}.
\end{equation*}

\section{Local converse theorem}

As an application, we are going to prove a variant of the local converse theorem for level zero representations. In light of \Cref{thm:GnXGm}, we will need converse theorems over a finite field. Roditty proves in \cite{Roditty10} that an irreducible cuspidal representation $\sigma$ of $G_n(\mathfrak{f})$ is determined up to a central character by the family of gamma factors
$$\{\gamma(\sigma \times \tau, \psi)\},$$
where $\tau$ runs over all irreducible generic representations of $G_r(\mathfrak{f})$ for $1 \le r \le n-2$. Nien improves this result by shrinking the range of $r$ to $1 \le r \le \left[\frac{n}{2}\right]$ in \cite{Nien14}, which is an analogue of a Jacquet's conjecture. In order to prove our local converse theorem, we need a stronger version Nien's result: allow $\tau$ only to run over all irreducible cuspidal representations of $G_r(\mathfrak{f})$ for $1 \le r \le \left[\frac{n}{2}\right]$.

\begin{lemma}\label{lem:criteria-for-being-zero}
Let $H$ be a function on $G_t(\mathfrak{f})$ satisfying
$$H(ug) = \psi(u)H(g),$$
for all $u \in U_t(\mathfrak{f}), g \in G_t(\mathfrak{f})$, and
$$\sum_{u \in U^\prime}H(gux) = 0,$$
for any nontrivial standard unipotent subgroup $U^\prime \subset G_t(\mathfrak{f})$ and $x \in G_t(\mathfrak{f})$. If
$$\sum_{g \in U_t(\mathfrak{f})\backslash G_t(\mathfrak{f})} H(g)W_\tau(g) = 0$$
for all $W_\tau \in \mathcal{W}(\tau, \psi^{-1})$ with $\tau$ exhausting all irreducible cuspidal representations of $G_t(\mathfrak{f})$, then
$$H \equiv 0.$$
\end{lemma}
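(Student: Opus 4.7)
The plan is to interpret $H$ as a vector in the Gelfand--Graev representation $V := \mathrm{Ind}_{U_t(\mathfrak{f})}^{G_t(\mathfrak{f})}\psi$ of $G_t(\mathfrak{f})$, with $G_t(\mathfrak{f})$ acting by right translation $R$. Uniqueness of Whittaker models over $\mathfrak{f}$ gives the multiplicity-free decomposition
\begin{equation*}
V \;\cong\; \bigoplus_{\pi} \mathcal{W}(\pi, \psi),
\end{equation*}
where $\pi$ runs over irreducible generic representations of $G_t(\mathfrak{f})$; I therefore write $H = \sum_\pi H_\pi$ with $H_\pi \in \mathcal{W}(\pi, \psi)$, and let $V_H \subset V$ denote the subrepresentation generated by $H$.

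Next I would show $V_H$ is cuspidal. Rewriting the hypothesis as
\begin{equation*}
\Bigl(\sum_{u \in U^\prime} R(u)\,R(x)\,H\Bigr)(g) \;=\; \sum_{u \in U^\prime} H(gux) \;=\; 0
\end{equation*}
for all $g,x \in G_t(\mathfrak{f})$ and all nontrivial standard unipotent $U^\prime$, the operator $\sum_{u \in U^\prime} R(u)$ kills every right translate of $H$, hence all of $V_H$. Since $V_H$ is semisimple, $\sum_{u \in U^\prime} R(u)$ acts as $|U^\prime|$ times the projector onto $V_H^{U^\prime}$; thus $V_H^{U^\prime} = 0$ whenever $U^\prime$ is the unipotent radical of a proper standard parabolic, which is exactly the cuspidality of $V_H$. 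Because $V$ decomposes into distinct irreducibles, $V_H$ is a direct sum of some of the $\mathcal{W}(\pi, \psi)$, and cuspidality forces $H_\pi = 0$ for every non-cuspidal $\pi$, giving
\begin{equation*}
H \;=\; \sum_{\sigma\ \text{cuspidal}} H_\sigma, \qquad H_\sigma \in \mathcal{W}(\sigma, \psi).
\end{equation*}

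To finish I would feed this into the $G_t(\mathfrak{f})$-invariant pairing
\begin{equation*}
\langle W, W^\prime\rangle \;=\; \sum_{g \in U_t(\mathfrak{f}) \backslash G_t(\mathfrak{f})} W(g)\, W^\prime(g) \quad \text{on } \mathcal{W}(\sigma, \psi) \otimes \mathcal{W}(\tau, \psi^{-1}).
\end{equation*}
By Schur's lemma this vanishes unless $\tau \cong \tilde{\sigma}$, and in that case the computation in the proof of \Cref{lem:representations-and-bessels} shows it is nonzero, hence perfect, identifying $\mathcal{W}(\tilde{\sigma}, \psi^{-1})$ with the linear dual of $\mathcal{W}(\sigma, \psi)$. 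Fixing a cuspidal $\sigma$ and taking $\tau = \tilde{\sigma}$ in the hypothesis, Schur orthogonality collapses $\langle H, W_\tau\rangle$ to $\langle H_\sigma, W_\tau\rangle$; thus $\langle H_\sigma, W_\tau\rangle = 0$ for every $W_\tau \in \mathcal{W}(\tilde{\sigma}, \psi^{-1})$, and non-degeneracy forces $H_\sigma = 0$. Since $\sigma$ was arbitrary, $H \equiv 0$.

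The main technical point I expect is the translation of the pointwise vanishing of unipotent sums into the cuspidality of the cyclic submodule $V_H$; once that is granted, the proof reduces to two classical facts over $\mathfrak{f}$, namely the multiplicity-one structure of the Gelfand--Graev module and the non-degeneracy of the Whittaker pairing between $\mathcal{W}(\sigma, \psi)$ and $\mathcal{W}(\tilde\sigma, \psi^{-1})$.
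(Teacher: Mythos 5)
Your proof is correct, and its first half mirrors the paper exactly: both arguments place $H$ inside the Gelfand--Graev module $\mathrm{Ind}_{U_t(\mathfrak{f})}^{G_t(\mathfrak{f})}\psi$, use the vanishing of the unipotent sums to show that the cyclic submodule generated by $H$ is a sum of irreducible cuspidal representations, and thereby write $H$ as a finite sum of cuspidal Whittaker functions. (The paper cites Gelfand's characterization of cuspidality via vanishing unipotent sums; you derive the same fact from the observation that $\sum_{u\in U'}R(u)$ is $|U'|$ times the projector onto $U'$-invariants — equivalent over $\C$.) The endgames, however, genuinely differ. The paper takes $W_\tau = \overline{W_{\tau_i}}$ for each cuspidal summand $\tau_i$ of $H$, noting that the complex conjugate $\overline{\tau_i}$ is again cuspidal so the hypothesis applies, and simply adds the resulting identities to get $\sum_{g}|H(g)|^2 = 0$, forcing $H\equiv 0$ in one stroke. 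You instead exploit the multiplicity-freeness of the Gelfand--Graev module together with Schur orthogonality between distinct isotypic components to isolate $\langle H_\sigma, W_{\tilde\sigma}\rangle$, and then invoke non-degeneracy of the Whittaker pairing $\mathcal{W}(\sigma,\psi)\times\mathcal{W}(\tilde\sigma,\psi^{-1})\to\C$ to kill each $H_\sigma$ separately. Both endgames are sound; the paper's conjugation trick is more economical (no appeal to Schur orthogonality or to the non-degeneracy of the pairing is needed), while yours is more structural, making explicit the multiplicity-one mechanism and the role of the contragradient — at the cost of needing the pairing computation, which you correctly locate in \Cref{lem:representations-and-bessels}.
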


\begin{proof}
Consider the representation $V(H)$ of $G_t(\mathfrak{f})$ generated by $R(x)H$ for $x \in G_t(\mathfrak{f})$, where $R$ is the right multiplication. Since $H(ug) = \psi(u)H(g)$, $V(H) \subseteq \mathrm{Ind}_{U_n(\mathfrak{f})}^{G_n(\mathfrak{f})} \psi$. The assumption that 
$$\sum_{u \in U^\prime}H(gux) = 0,$$
for all nontrivial standard unipotent subgroup $U^\prime$ and $x \in G_t(\mathfrak{f})$ is equivalent to that for any $f \in V(H)$, and for any nontrivial standard unipotent subgroup $U^\prime$,
$$\sum_{u \in U^\prime} R(u)f = 0.$$
By \cite[Proposition 3.2 and Theorem 3.3]{Gelfand70}, we know that a representation $\sigma$ of $G_t(\mathfrak{f})$ is a sum of irreducible cuspidal representations if and only if, for any nontrivial standard unipotent subgroup $U^\prime$,
 $$\sum_{u \in U^\prime} \sigma(u) = 0.$$
Thus, $V(H)$ is a sum of irreducible cuspidal representations. Therefore,
$$H = \sum_{i=1}^{k} W_{\tau_i},$$
for some irreducible cuspidal representations $\tau_i$ and $W_{\tau_i} \in \mathcal{W}(\tau_i, \psi)$. Since the complex conjugate $\overline{W_{\tau_i}}$ is in $\mathcal{W}(\overline{\tau_i}, \psi^{-1})$, where $\overline{\tau}$ is the complex conjugate of $\tau$,
$$\sum_{g \in U_t(\mathfrak{f})\backslash G_t(\mathfrak{f})} H(g)\overline{W_{\tau_i}(g)} = 0$$
for $i = 1, \dots, k$, so we get
$$\sum_{g \in U_t(\mathfrak{f})\backslash G_t(\mathfrak{f})} H(g)\overline{H(g)} = 0.$$
This forces $H \equiv 0$.
\end{proof}

This lemma allows us to get a slightly stronger converse theorem over a finite field, which is a key to our local converse theorem for level zero representations.

\begin{theorem}\label{thm:local-converse-finite}
Let $\sigma_1$ and $\sigma_2$ be irreducible cuspidal representations of $G_n(\mathfrak{f})$ with the same central character. If
$$\gamma(\sigma_1 \times \tau, \psi) = \gamma(\sigma_2 \times \tau, \psi),$$
for all irreducible cuspidal representations $\tau$ of $G_r(\mathfrak{f})$ with $1 \le r \le \left[\frac{n}{2}\right]$, then $\sigma_1 \cong \sigma_2$.
\end{theorem}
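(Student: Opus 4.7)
The plan is to mirror Nien's proof of the generic finite-field local converse theorem in \cite{Nien14}, replacing the one step where the full generic hypothesis on $\tau$ is used with an appeal to \Cref{lem:criteria-for-being-zero}. Using the Bessel-function expressions for the finite-field gamma factors (the analogues of \eqref{eqn:finite-gamma-in-bessel-GnXGm} for $r<n$ and \eqref{eqn:finite-gamma-in-bessel-GnXGn} for $r=n/2$ when $n$ is even), I would first rewrite the hypothesis $\gamma(\sigma_1 \times \tau, \psi) = \gamma(\sigma_2 \times \tau, \psi)$ as the vanishing of a pairing
\[
\sum_{h \in U_r(\mathfrak{f}) \backslash G_r(\mathfrak{f})} H_r(h)\, W_\tau(h) \;=\; 0
\]
for every irreducible cuspidal $\tau$ of $G_r(\mathfrak{f})$ with $1 \le r \le [n/2]$, where $H_r$ is the function on $G_r(\mathfrak{f})$ obtained from $J_{\sigma_1, \psi} - J_{\sigma_2, \psi}$ by inserting the appropriate block matrix and summing over the $M_{r, n-r-1}(\mathfrak{f})$ entries (for $r < n/2$) or by inserting the Fourier transform of the indicator of $e_n$ (for $r = n/2$).

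To apply \Cref{lem:criteria-for-being-zero} to $H_r$, two hypotheses must be checked: the $(U_r(\mathfrak{f}),\psi)$-equivariance, and the cuspidality-type vanishing $\sum_{u \in U'} H_r(gux) = 0$ on every nontrivial standard unipotent subgroup $U' \subset G_r(\mathfrak{f})$. The first is immediate from the Whittaker equivariance of the Bessel functions $J_{\sigma_i, \psi}$. The second is where the cuspidality of $\sigma_1$ and $\sigma_2$ must be used in an essential way: via Proposition 3.2 and Theorem 3.3 of \cite{Gelfand70}, cuspidality of $\sigma_i$ is equivalent to $\sum_{u \in U''} \sigma_i(u) = 0$ for every nontrivial standard unipotent $U'' \subset G_n(\mathfrak{f})$, and I would lift any unipotent $U' \subset G_r(\mathfrak{f})$ to a corresponding standard unipotent of $G_n(\mathfrak{f})$ sitting in the top-left block of the matrix inside $J_{\sigma_i, \psi}$, and then use the cuspidal vanishing of $\sigma_i$ at that larger level.

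Granting these two properties, \Cref{lem:criteria-for-being-zero} gives $H_r \equiv 0$ for each $1 \le r \le [n/2]$. One then runs the same Bruhat-cell/inductive argument as in \cite{Nien14} to show that these vanishings, combined with the equality of central characters of $\sigma_1$ and $\sigma_2$, force $J_{\sigma_1, \psi} \equiv J_{\sigma_2, \psi}$ on all of $G_n(\mathfrak{f})$. Since an irreducible generic representation is determined by its normalized Bessel function (Proposition 4.5 of \cite{Gelfand70}), this yields $\sigma_1 \cong \sigma_2$.

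The main obstacle I expect is the verification of the unipotent-vanishing hypothesis for the precise $H_r$ produced by the gamma-factor formula: after summing over the $M_{r, n-r-1}(\mathfrak{f})$ variable the resulting function on $G_r(\mathfrak{f})$ does not inherit cuspidality in an obvious way, and one must carefully track the interaction between the translation $h \mapsto hu$ and the block shape inside $J_{\sigma_i, \psi}$ in order to realize it as a vanishing sum coming from cuspidality of $\sigma_i$ at the $G_n(\mathfrak{f})$ level. Once this bookkeeping is done, the rest of the argument is a direct transplant of Nien's combinatorial induction.
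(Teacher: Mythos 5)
Your high-level plan is exactly the paper's: transplant Nien's proof of \cite[Theorem 3.9]{Nien14} verbatim, replacing her Lemma 3.1 (which needs the pairing to vanish against all \emph{generic} $\tau$) with \Cref{lem:criteria-for-being-zero} (which needs it only for \emph{cuspidal} $\tau$). That identification of where the improvement enters is the key insight, and you have it.

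Where your proposal diverges, and where a real gap remains, is in the choice of function to which \Cref{lem:criteria-for-being-zero} is applied. You propose to feed the lemma the function
\[
H_r(h) \;=\; \sum_{x \in M_{r,n-r-1}(\mathfrak{f})}\bigl(J_{\sigma_1,\psi}-J_{\sigma_2,\psi}\bigr)\begin{pmatrix}& 1 & \\ & & I \\ h & & x\end{pmatrix},
\]
obtained from the gamma-factor equality \eqref{eqn:finite-gamma-in-bessel-GnXGm} directly. You yourself flag that verifying the second hypothesis of the lemma, $\sum_{u\in U'}H_r(gux)=0$, for this $x$-summed object is the sticking point, and that concern is well-founded: the summation over $x$ does not commute cleanly with the right translation by $\mathrm{blockdiag}(u,1,I_{n-r-1})$ in any way that obviously reduces to cuspidality of $\sigma_i$. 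The paper does \emph{not} apply the lemma to this $H_r$. Instead, the lemma is applied \emph{inside} Nien's Bruhat-cell induction, at each step, to the function
\[
H(m) \;=\; J_{\sigma_1,\psi}\Bigl(\begin{pmatrix}0&I_{n-r}\\ m&0\end{pmatrix}t\Bigr)-J_{\sigma_2,\psi}\Bigl(\begin{pmatrix}0&I_{n-r}\\ m&0\end{pmatrix}t\Bigr),
\]
a bare difference of Bessel values with no auxiliary $x$-sum. For this $H$, the $(U_r,\psi)$-equivariance falls out of the Bessel-function equivariance after conjugating $\mathrm{blockdiag}(I_{n-r},u)$ across the antidiagonal block, and the unipotent vanishing reduces to right-translation of $J_{\sigma_i,\psi}$ (a vector in a cuspidal $G_n(\mathfrak{f})$-representation) by $\mathrm{blockdiag}(U',I_{n-r})$. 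Nien's argument produces the pairing equality
\[
\sum_{m\in U_r(\mathfrak{f})\backslash G_r(\mathfrak{f})}H(m)W_\tau(m)=0
\]
for each $t$ arising in her induction, and the lemma converts it to pointwise vanishing; the $x$-sum has already been dispatched earlier in the unfolding. So the correct move is to apply the lemma repeatedly and locally, not once globally to the $x$-summed $H_r$.

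Two further remarks. First, your dichotomy ``$r<n/2$ vs.\ $r=n/2$'' is misplaced: since $1\le r\le [n/2]<n$, the relevant gamma factor is always $\gamma(\sigma\times\tau,\psi)$ with $\sigma$ on $G_n$ and $\tau$ on $G_r$ with $r<n$, so it is always the $G_n\times G_m$ ($n>m$) formula \eqref{eqn:finite-gamma-GnXGm}; the $G_n\times G_n$ formula with the Fourier-transform term plays no role here. Second, even for the paper's $H$, the verification that $\mathrm{blockdiag}(U',I_{n-r})$ yields the required vanishing is genuinely the crux --- the block-embedded unipotent is not literally the full unipotent radical of a standard parabolic of $G_n(\mathfrak{f})$ --- and both Nien's argument and the paper lean on this step; it deserves the care you anticipate, but the starting function must be the un-summed Bessel difference for that care to pay off.
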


\begin{proof}
Identical to the proof of \cite[Theorem 3.9]{Nien14}, only with her Lemma 3.1 replaced by our \Cref{lem:criteria-for-being-zero}. The key idea is to prove the Bessel functions $J_{\sigma_1, \psi}$ and $J_{\sigma_2, \psi}$ are equal. The equality of these two Bessel functions is established through induction on levels. To make her proof work, we must show that, for any $t \in G_n(\mathfrak{f})$, whenever
\begin{equation*}
\begin{split}
&\sum_{m \in U_r(\mathfrak{f}) \backslash G_r(\mathfrak{f})} J_{\sigma_1, \psi}
\left(\begin{pmatrix}0 & I_{n-r} \\ m & 0\end{pmatrix}t\right)
W_{\tau}(m) \\
=&\sum_{m \in U_r(\mathfrak{f}) \backslash G_r(\mathfrak{f})} J_{\sigma_2, \psi}
\left(\begin{pmatrix}0 & I_{n-r} \\ m & 0\end{pmatrix}t\right)
W_{\tau}(m)
\end{split}
\end{equation*}
for all $W_{\tau} \in \mathcal{W}(\tau, \psi^{-1})$ with $\tau$ exhausting all cuspidal representations of $G_r(\mathfrak{f})$, then
$$J_{\sigma_1, \psi} \left(\begin{pmatrix}0 & I_{n-r} \\ m & 0\end{pmatrix}t\right) = J_{\sigma_2, \psi} \left(\begin{pmatrix}0 & I_{n-r} \\ m & 0\end{pmatrix}t\right),$$
for all $m \in G_r(\mathfrak{f})$. If we define $H: G_r(\mathfrak{f}) \to \C$ by
$$H(m) = J_{\sigma_1, \psi}\left(\begin{pmatrix}0 & I_{n-r} \\ m & 0\end{pmatrix}t\right) - J_{\sigma_2, \psi}\left(\begin{pmatrix}0 & I_{n-r} \\ m & 0\end{pmatrix}t\right),$$
then we can easily get $H(um) = \psi(u)H(m)$, because $J_{\sigma_1, \psi}$ and $J_{\sigma_2, \psi}$ are Bessel functions, and thus for $i =1, 2$,
\begin{equation*}
J_{\sigma_i, \psi}\left(\begin{pmatrix}0 & I_{n-r} \\ um & 0\end{pmatrix}t\right)
= J_{\sigma_i, \psi}\left(\begin{pmatrix}I_{n-r} & 0 \\ 0 & u\end{pmatrix}\begin{pmatrix}0 & I_{n-1} \\ m & 0\end{pmatrix}t\right)
= \psi(u)J_{\sigma_i, \psi}\left(\begin{pmatrix}0 & I_{n-r} \\ m & 0\end{pmatrix}t\right).
\end{equation*}
Since $J_{\sigma_i, \psi}$ is in a cuspidal representation, so for any nontrivial standard unipotent subgroup $U^\prime \subset G_n(\mathfrak{f})$ and any $x \in G_n(\mathfrak{f})$, we have
\begin{equation*}
\sum_{u \in U^\prime} R(u)R(x)J_{\sigma_i, \psi} = 0,
\end{equation*}
i.e., for any $g \in G_n(\mathfrak{f})$,
\begin{equation*}
\sum_{u \in U^\prime} J_{\sigma_i, \psi}(gux) = 0.
\end{equation*}
In particular, for any nontrivial unipotent subgroup $U^\prime \subset G_r(\mathfrak{f}), x, m \in G_r(\mathfrak{f})$ and $p \in G_n(\mathfrak{f})$, we have
\begin{equation*}
\sum_{u \in U^\prime} J_{\sigma_i, \psi}
\left(
\begin{pmatrix} 0 & I_{n-r} \\ m & 0\end{pmatrix}
\begin{pmatrix} u & 0 \\ 0 & I_{n-r}\end{pmatrix}
\begin{pmatrix} x & 0 \\ 0 & I_{n-r}\end{pmatrix}
t
\right) = 0.
\end{equation*}
This equality will imply then for any nontrivial unipotent subgroup $U^\prime \subset G_r(\mathfrak{f})$ and $x, m \in G_r(\mathfrak{f})$,
$$\sum_{u \in U^\prime} H(mux) = 0.$$
By \Cref{lem:criteria-for-being-zero}, $H(m) = 0$ for any $m \in G_r(\mathfrak{f})$, i.e., 
$$J_{\sigma_1, \psi} \left(\begin{pmatrix}0 & I_{n-r} \\ m & 0\end{pmatrix}t\right) = J_{\sigma_2, \psi} \left(\begin{pmatrix}0 & I_{n-r} \\ m & 0\end{pmatrix}t\right),$$
for any $m \in G_r(\mathfrak{f})$.
\end{proof}

\begin{theorem}\label{thm:local-converse-level-zero}
Let $\pi_1$ and $\pi_2$ be level zero representations of $G_n(F)$ with the same central character. If
$$\gamma(s, \pi_1 \times \tau, \psi) = \gamma(s, \pi_2 \times \tau, \psi)$$
for all level zero representations of $G_r(F)$ with $1 \le r \le \left[\frac{n}{2}\right]$, then $\pi_1 \cong \pi_2$.
\end{theorem}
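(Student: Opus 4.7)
The plan is to reduce the $p$-adic local converse statement to the finite-field version (\Cref{thm:local-converse-finite}) via \Cref{thm:main}. Write $\pi_i$ as coming from the pair $(\lambda_i, \sigma_i)$ with $\lambda_i \in \C^\times$ and $\sigma_i$ an irreducible cuspidal representation of $G_n(\mathfrak{f})$. The first step is to unpack the central character hypothesis: since $\omega_{\pi_i}|_{\mathfrak{o}^\times}$ is the inflation of $\omega_{\sigma_i}$ and $\omega_{\pi_i}(\varpi) = \lambda_i$, the equality $\omega_{\pi_1} = \omega_{\pi_2}$ immediately produces both $\lambda_1 = \lambda_2$ and $\omega_{\sigma_1} = \omega_{\sigma_2}$.

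Next I would translate the twist hypothesis into a finite-field statement. For any irreducible cuspidal representation $\sigma'$ of $G_r(\mathfrak{f})$ with $1 \le r \le \left[\frac{n}{2}\right]$, pick any level zero representation $\tau$ of $G_r(F)$ coming from $\sigma'$, for example the one associated to $(1, \sigma')$. Because $r < n$, the pair $(\pi_i, \tau)$ falls in the regime of \Cref{thm:GnXGm}, so no contragradient exclusion is needed, and
$$\gamma(s, \pi_i \times \tau, \psi) = \gamma(\sigma_i \times \sigma', \psi)$$
for $i=1,2$. The hypothesis of the theorem then yields $\gamma(\sigma_1 \times \sigma', \psi) = \gamma(\sigma_2 \times \sigma', \psi)$ for every irreducible cuspidal $\sigma'$ of $G_r(\mathfrak{f})$ in the required range. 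Combining this with $\omega_{\sigma_1} = \omega_{\sigma_2}$, \Cref{thm:local-converse-finite} delivers $\sigma_1 \cong \sigma_2$. Together with $\lambda_1 = \lambda_2$ from the first step, the pairs $(\lambda_i, \sigma_i)$ coincide, and the Bushnell--Kutzko bijection recalled in \Cref{sec:level-zero} produces $\pi_1 \cong \pi_2$.

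The argument is essentially a dictionary translation, so there is no real obstacle beyond assembling the right ingredients. The only point to notice is that the parameter $\lambda_\tau$ of the twisting level zero representation plays no role, because \Cref{thm:main} eliminates it from the gamma factor, so matching at a single choice of $\lambda_\tau$ per $\sigma'$ already captures every finite-field cuspidal twist. The reason \Cref{thm:local-converse-finite} is stated with \emph{cuspidal} (rather than all generic) twists is precisely to accommodate this reduction: a level zero twist $\tau$ of $G_r(F)$ only ever produces a finite-field cuspidal $\sigma'$, never a non-cuspidal generic one, so Nien's original statement involving generic twists would not directly suffice. The actual mathematical content therefore lives entirely in the earlier \Cref{thm:main} and \Cref{thm:local-converse-finite}, and the present theorem follows by combining them.
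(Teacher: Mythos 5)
Your proof is correct and follows essentially the same route as the paper: reduce to the finite-field converse \Cref{thm:local-converse-finite} via \Cref{thm:GnXGm} (the $n>m$ case of the main theorem, so no contragradient exclusion arises), with the central character hypothesis simultaneously yielding $\lambda_1 = \lambda_2$ and the equality of the finite-field central characters. You spell out the role of $\lambda$ and the reason the finite-field converse theorem had to be strengthened from generic twists to cuspidal twists a bit more explicitly than the paper does, but the underlying argument is identical.
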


\begin{proof}
For $i = 1, 2$, we assume $\pi_i$ is from $\sigma_i$. That is to say, there exists a representation $\Lambda_i$ on $F^\times G_n(\mathfrak{o})$ such that it restricts to an inflation $\sigma_i$ on $G_n(\mathfrak{o})$ and 
$$\pi_i \cong \mathrm{ind}_{F^\times G_n(\mathfrak{o})}^{G_n(F)} \Lambda_i.$$
Thus, to prove $\pi_1 \cong \pi_2$, we just need to prove $\Lambda_1 \cong \Lambda_2$. Since by assumption $\pi_1$ and $\pi_2$ have the same central character, it is enough to prove $\sigma_1 \cong \sigma_2$.

Since the central character $\omega_{\pi_i}$ of $\pi_i$ restricts to that of $\sigma_i$, $\sigma_1$ and $\sigma_2$ also have the same central character. By assumption on $\gamma$-factors and \Cref{thm:GnXGm},
$$\gamma(\sigma_1 \times \tau, \psi) = \gamma(\sigma_2 \times \tau, \psi),$$
for all irreducible cuspidal representations $\tau$ of $G_r(\mathfrak{f})$ with $1 \le r \le \left[\frac{n}{2}\right]$. Applying \Cref{thm:local-converse-finite}, we have $\sigma_1 \cong \sigma_2$.
\end{proof}

\section*{Acknowledgement}

The author thanks his advisor, Prof. James Cogdell, for helpful comments and discussions. He also thanks Qing Zhang for pointing out references on local converse theorem over finite fields.

\bibliographystyle{abbrv}
\bibliography{references}

\end{document}